\newcommand{\E}{{\mathbb{E}}}
\newcommand{\Z}{{\mathbb{Z}}}
\newcommand{\Prob}{{\mathbb{P}}}
\newcommand{\1}{{\mathbf{1}}}
\let\mathcal\mathscr
\let\geq\geqslant
\let\leq\leqslant
\renewcommand\ge{\geq}
\def\AA{\mathcal{A}}
\def\ee{\mathrm{e}}
\def\var{\mathrm{var}}
\def\cov{\mathrm{cov}}
\theoremstyle{plain}
\newtheorem{theo}{Theorem}[section]
\newtheorem{coro}[theo]{Corollary}
\newtheorem{defi}[theo]{Definition}
\newtheorem{prop}[theo]{Proposition}
\newtheorem{lemm}[theo]{Lemma}
\newtheorem{conj}[theo]{Conjecture}
\theoremstyle{remark}
\begin{document}

\title[Phylogenetic distances for models with influence]{Phylogenetic distances for neighbour dependent substitution processes}
\author{Mikael Falconnet}
\address{Université Joseph Fourier Grenoble 1\\
Institut Fourier UMR 5582 UJF-CNRS\\
100 rue des Maths, BP 74\\
38402 Saint Martin d'Hères\\
France }
\date{\today}



\begin{abstract}
  We consider models of nucleotidic substitution processes where the
  rate of substitution at a given site depends on the state of the
  neighbours of the site. 
  We first
  estimate the time elapsed between an ancestral sequence at
  stationarity and a present sequence. Second,
  assuming that two sequences are issued from a common
  ancestral sequence at stationarity, we estimate the time since
   divergence. In the simplest nontrivial case of a
  Jukes-Cantor model with CpG influence, we provide and justify mathematically
  consistent estimators in these two
  settings. We also provide asymptotic confidence intervals, valid for
  nucleotidic sequences of finite length, and we compute explicit
  formulas for the estimators and for their confidence intervals. In
  the general case of an RN model with YpR influence, we extend these
  results under a proviso, namely that the equation defining
  the estimator has a unique solution.
\end{abstract}

\subjclass{Primary: 60J25. Secondary: 62P10; 62F25; 92D15; 92D20.}

\keywords{Markov processes, Confidence intervals, DNA sequences, Phylogenetic distances, CpG deficiency}

\maketitle

\section*{Introduction}

A crucial step in the computation of phylogenetic trees based on
aligned DNA sequences is the estimation of the evolutionary times
between these sequences. In most phylogenetic algorithms based on stochastic substitution models, one assumes
that each site evolves independently from the others and, in general,
according to a given Markovian kernel. This assumption is mainly due to the difficulty to work without the assumption of independence. To understand why, note that the distribution of the nucleotide at site $i$ at a given time depends a priori on the values at previous times of the dinucleotides at sites $i-1$ and $i+1$, whose joint distributions, in turn, may depend on the values of some trinucleotides, and so on. Hence, one is faced with infinite-dimensional linear systems, which are generically hard to solve. 
Besides, the magnitude of the effect  of the neighbours on the substitution rates can be large.
Since some neighbour influences are well documented in the literature,
and caused by well known biological mechanisms, it seems necessary to take into account the neighbour influences in substitution models. To wit, a class
of mathematical models with neighbour influences was recently introduced by
biologists, see \cite{galt:phylo}, and studied mathematically, see \cite{piau:solv}. 

The goal of the present paper is to show that one can compute consistent estimators of
the distances between DNA sequences whose evolution is ruled by models with influence in a specific class of models.

We completely describe the construction in the simplest
non trivial case, the Jukes-Cantor model with (symmetric) CpG
influence and we explain in the appendix how to extend our construction to every model in the class.

In section~\ref{sect:mode}, we describe the Jukes-Cantor model with CpG
influence, the simplest one of the class of manageable models introduced in~\cite{piau:solv}, 
and its main properties. 
In section~\ref{sect:main}, we summarize our main results on the estimation of the elapsed time between an old DNA sequence and a present one, and on the time since two present DNA sequences issued from the same ancestral sequence diverged. The appendix contains the extension of the results of section~\ref{sect:main}. In the other sections we prove our results.
At the end of section~\ref{sect:main} is a plan of the rest of the paper.

%
\section{Models with influence} \label{sect:mode}

We first describe the Jukes-Cantor model with CpG
influence 
which the results of this paper apply. Then, we mention its main mathematical properties, already established in \cite{piau:solv}, 
and we introduce some notations.

Recall that DNA sequences are encoded by the alphabet
$\AA=\{A,T,C,G\}$, where the letters stand for Adenine, Thymine,
Cytosine and Guanine respectively. 
Thus, bi-infinite DNA sequences are encoded as elements of
$ \AA^{\Z} $ where $\Z$ is the set of integers.

 \subsection{Jukes-Cantor model with CpG
influence (JC+CpG)}

In most models of DNA evolution, one assumes that each site evolves
independently from the others and follows a given Markovian kernel,
see \cite{jukes:69}, \cite{kimu:80}, \cite{fels:81} and \cite{hase:85}
for instance. Even in codon evolution models, see \cite{jones:92}, one
often assumes that different codons evolve independently,  with however some exceptions such as \cite{jensen:prob}. On the other hand, it 
is a well known experimental fact, see \cite{duret:CpG} by example, that the nature of the close
neighbours of a site can modify, notably in some cases, the substitution rates observed at this site. To take account of these observations, 
we consider models, in continuous time, where the sequence
evolves under the combined effect of two superimposed mechanisms. 

The
first mechanism is an independent evolution of the sites as in the
usual models. Hence it is characterized by a $ 4\times4 $ matrix of
substitution rates, each rate being the mean number of substitutions per unit of time.
The simplest case is the Jukes-Cantor model, where each substitution happens at the same rate.
Hence, the rate of the substitutions
of $ x $ by $ y $ is set to $ 1 $, for every nucleotides $x$ and $y$ in $\AA$. 

A second mechanism is superimposed, which describes the substitutions due to the influence of the neighborhood: the
most noticeable case is based on experimentally observed 
CpG-methylation-deamination processes, whose biochemical causes are
well known. Hence we assume that the substitution rates of cytosine by
thymine and of guanine by adenine in CpG dinucleotides are both
increased by an additional nonnegative rate $ r $.

This means for example that any $C$ site whose right neighbour is
not occupied by a $ G $, changes at global rate $ 3 $, hence after an exponentially distributed random 
time with mean $ 1/3 $, and when it does, it becomes an $ A $, a $ G
$ or a $ T $ with probability $ 1/3 $ each. On the contrary, any $C$
site whose right neighbour is occupied by a $ G $, changes at
global rate $ s=3 + r $, hence after an exponentially distributed random time with mean $
1/s $, and when it does, it becomes an $ A $, a $ G $ or a $ T
$ with unequal probabilities $ 1/s $, $ 1/s $, and $
(1+r)/s $ respectively.

The case $ r = 0 $ corresponds to the usual Jukes-Cantor model. As
soon as $r\ne0$, the evolution of a site is not independent of the
rest of the sequence. Hence the evolution of the complete sequence is
Markovian (on a huge state space), but not the evolution of a given
site, nor of any given finite set of sites.

Recall from \cite{piau:solv} that the relevant class of models, called RN+YpR in this paper, is in fact larger than just described.

As already mentioned, the results of this paper about Jukes-Cantor models with CpG influence (hereafter denoted JC+CpG) are adapted to every RN model with YpR influence (hereafter denoted RN+YpR) in the appendix.

\subsection{Main properties}

We work on the space $ \AA^\Z $ with the topology product and the cylindric $\sigma$-algebra defined as the smallest $\sigma$-algebra such that every projection on $ \AA^\Z $ is measurable.

We now recall some results of \cite{piau:solv}, valid for every RN+YpR model. First, for
every probability measure $ \nu $ on $ \AA^\Z $, there exists a unique
Markov process $ (X(t))_{t \geq 0} $ on $ \AA^\Z $, with initial distribution $ \nu $, associated
to the transition rates above. Thus, for every time $t$, $X(t)$ describes the whole sequence and, for every $i$ in $\Z$, the $i$th coordinate 
$X_i(t)$ of $ X(t) $ is the random value of the nucleotide at site $i$ and time $t$.
Under a non-degenaracy condition on the rates of the model, the process $ (X(t))_{t \geq 0} $ is
ergodic, its unique stationary distribution $ \pi $ on $\AA^\Z$ is invariant and
ergodic with respect to the translations of $\Z$, and $ \pi $ puts a
positive mass on every finite word $ w=(w_i)_{0\leq i\leq\ell} $ written in the
alphabet $ \AA $. The notation $\pi(w)$ is abusive because $\pi$ is a measure on $\AA^\Z$ but it is a shorthand for $\pi(\Pi_{0,\ell}^{-1}(\{w\}))$, where $\Pi_{0,\ell}$ is such that for every $ x \in \AA^\Z $, $\Pi_{0,\ell}(x)= (x_i)_{0\leq i\leq\ell}$.

Furthermore, for every
position $i$ in $ \Z $, $ \Prob_\nu(X_{i:i+\ell}(t)=w) $ converges to $ \pi(w) $
when $ t\to+\infty $, where $\Prob_\nu$ stands for the probability under the initial measure $\nu$. Here and later on, for every indices
$ i $ and $ j $ in $ \Z $ with $ i \leq j $ and every symbol $ S $, the
shorthand $ S_{i:j}$ denotes $(S_k)_{i\leq k\leq j} $.  
Finally, if $\xi$ in $\AA^\Z$ is distributed according to $\pi$, the empirical frequencies
of any word $ w $ in $ \xi $, observed along any increasing sequence of
intervals of $ \Z $, almost surely converge to $ \pi(w) $.

All of the above properties stem from the following representation of the distribution $ \pi $. 
There exists an i.i.d.\ sequence $ (\xi_i)_{i\in\Z} $ of Poisson
processes, and a measurable map $ \Psi $ with
values in $ \AA $, such that if one sets
\[
\Xi_i = \Psi(\xi_{i-1},\xi_i,\xi_{i+1})
\]
for every site $i$ in $\Z$, 
then the distribution of $ ( \Xi_i )_{ i\in\Z } $ is $ \pi $.
In particular, any collections $(\Xi_i)_{i\in I}$ and
$(\Xi_i)_{i\in J}$ are independent as soon as the subsets $I$ and $J$ of $ \Z
$ are such that $ |i-j| \geq 3 $ for every sites $ i $ in $ I $ and $ j $
in $ J $. We call this property $2$-dependence.

\subsection{Notations}

Our estimators are based on various quantities provided by the
alignment of the two sequences.

\begin{figure}[ht]
\begin{center}
 \includegraphics{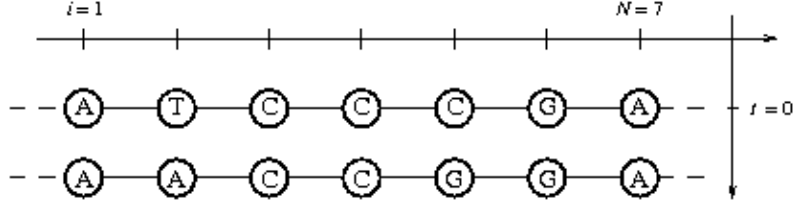}
\end{center}
\caption{Alignment of an ancestral sequence and a present one}
\label{figu:alig}
\end{figure}

For every $\ell\ge0$ and every word $ w $ of length $
\ell + 1 $ written in the alphabet $ \AA $, say that
site $ i $ is occupied at time $ t $ by $ w $ if $
X_{i:i+\ell}(t)=w $.
For every triple of subsets $ W $, $W'$ and $W''$ of words
and every couple of times $ t $ and $ s $,  $ (W)(t) $ denotes the frequency of
sites occupied by any of the words in $ W $ at time $ t $, that is
\[
(W)(t) = \lim_{N\to\infty} \frac1{N} \sum_{i=0}^N \sum_{w\in W} H_i(t,w), \quad \mbox{where} \quad H_i(t,w)= \1\{X_{i:i+\ell}(t)=w\},
\] 
and $( W,W' )(t)$ the
frequency of sites occupied by any of the words in $W$ at time $ 0 $
and any of the words in $ W' $ at time $ t $, that is
\[
( W,W' )(t) = \lim_{N\to\infty} \frac1{N} \sum_{i=0}^N \sum_{w\in W}\sum_{w' \in W'} H_i(0,w) H_i(t,w').
\]

The limits above exist thanks to the ergodicity of $\pi$ with respect to translations.

When comparing two present sequences, we use the following notations. For every sets $ W $ and $ W' $ of words and every time $ t $, $ [W,W'](t) $ denotes the frequency of sites occupied by a word of $ W $ in the left sequence (denoted by $ X^1$) and by a word of $ W' $ in the right sequence (denoted by $ X^2$). 

We identify a word $ w $ and the
set of words $\{w\}$. For every letter $x$ in the alphabet $
\AA $, we use the shorthands $ *x = \AA\times\{x\} $, $x*=\{x\}\times\AA$, $x*x =\AA\times\{x\}\times \AA$  and $ \bar{x} = \AA \setminus \{x\} $.

%
\section{Summary of main results} \label{sect:main}

Our main result is theorem~\ref{theo:finalC} below, which provides asymptotic confidence intervals for an estimation procedure of the time elapsed 
between a present sequence and an ancestral one and for the time since two present sequences issued from the same ancestral sequence diverged, for the Jukes-Cantor model with CpG influence (JC+CpG) of
intensity $r$. These intervals are based on two consistent estimators of the elapsed time and two consistent estimators of the time of divergence. 

Our first estimator is based on the evolution of the frequency $ ( C,C )(t) $ when the time $t$ varies
and the second one on the evolution of $ ( A,A ) (t) $. These estimators match the classic ones used for the original Jukes-Cantor model when $r=0$. The symmetry of the roles played by $A$ and $T$, or by $C$ and $G$ in the JC+CpG model immediately gives the relations $(A,A)(t)=(T,T)(t)$ and $(G,G)(t)=(C,C)(t)$. 

Our estimators for the divergence time are based on the evolution of the frequency $ [ C,C ](t) $ when the time $t$ varies and on the evolution of $ [ A,A ] (t) $. Even if the results are given in the same theorem, there is a substantial difference between $[C,C]$ and $[A,A]$. Indeed, as we explain in sections~\ref{sect:evCA} and~\ref{sect:evac}:
\begin{theo}
In the JC+CpG model, for every positive $t$,
\[
  [C,C](t) = (C,C)(2t), \quad [A,A](t) \neq (A,A)(2t).
\]
\end{theo}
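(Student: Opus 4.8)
The plan is to reduce both families of quantities to inner products of a single indicator under the forward semigroup and under its time-reversal, and then to extract the theorem from the algebra of that reduction. First I would pass from the spatial frequencies to single-site probabilities: since $\pi$ is translation invariant and ergodic and the two daughter sequences evolve independently from the common ancestor, the joint law of $(X,X^1,X^2)$ is again translation invariant and ergodic, so each frequency equals the corresponding probability at the site $i=0$. Writing $f_S=\1\{X_0=S\}$ for $S\in\AA$, $P_t$ for the forward semigroup on $L^2(\pi)$ and $P_t^{\ast}$ for its $\pi$-adjoint (the time-reversed semigroup), I would establish the two representations
\[
[S,S](t)=\E_\pi\big[(P_tf_S)(P_tf_S)\big]=\langle P_tf_S,P_tf_S\rangle_\pi,\qquad (S,S)(2t)=\langle f_S,P_{2t}f_S\rangle_\pi=\langle P_t^{\ast}f_S,P_tf_S\rangle_\pi .
\]
The first uses conditional independence of the two branches given the whole ancestral configuration; the second uses the Markov property at the midpoint together with stationarity, the backward half of the trajectory producing the adjoint $P_t^{\ast}$. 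Subtracting, the difference $[S,S](t)-(S,S)(2t)$ equals $\langle (P_t-P_t^{\ast})f_S,\,P_tf_S\rangle_\pi$, so everything hinges on the ``irreversibility'' operator $P_t-P_t^{\ast}$, which vanishes only in the reversible case $r=0$.

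For $S=C$ the target is the exact identity $\langle (P_t-P_t^{\ast})f_C,P_tf_C\rangle_\pi=0$ for every $t$. Here I would exploit the orientation of the CpG mechanism: the extra rate $r$ is attached to $C\to T$, a jump that \emph{leaves} the tracked state $C$ and is triggered by the right neighbour. Concretely, I would derive the finite linear differential system governing the few joint occupation frequencies entering $(C,C)(t)$ — the $2$-dependence of $\pi$ and the solvability of \cite{piau:solv} guarantee that the otherwise infinite hierarchy closes — and show that this system is left invariant by the forward/backward exchange encoded in $P_t\leftrightarrow P_t^{\ast}$, so that its solution automatically obeys the doubling relation $[C,C](t)=(C,C)(2t)$. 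Equivalently, one checks that the antisymmetric (probability-current) part of the generator, applied to $f_C$, remains $\pi$-orthogonal to $P_tf_C$ for all $t$.

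For $S=A$ the same machinery applies, but now the extra rate $r$ sits on $G\to A$, a jump that \emph{enters} the tracked state $A$ and is triggered by the \emph{left} neighbour; this reversed orientation is exactly what destroys the symmetry used for $C$. I would confirm $[A,A](t)\neq(A,A)(2t)$ by evaluating the first non-vanishing Taylor coefficient of the difference at $t=0$, which is proportional to $\langle Lf_A,(L-L^{\ast})f_A\rangle_\pi$ (the value and the first derivative of the difference vanish automatically), and checking that it is a nonzero multiple of $r$; solving the corresponding linear system explicitly then exhibits a difference of constant sign for all $t>0$.

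The main obstacle is the exact cancellation in the $C$ case: proving $\langle (P_t-P_t^{\ast})f_C,P_tf_C\rangle_\pi\equiv 0$ is a statement about all orders in $t$, not merely the leading one, and it genuinely uses the ``leaves $C$, right-neighbour trigger'' structure of the boost. I do not expect a pointwise reversibility statement to work — that already fails, since a direct computation gives $(L-L^{\ast})f_C\neq0$ — so the cleanest route is instead the closed differential system for the handful of occupation frequencies involved, whose internal symmetry forces the doubling identity for $C$ while visibly breaking it for $A$.
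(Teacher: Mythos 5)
Your reduction to the semigroup/adjoint picture is sound, and for the $C$ identity you land on essentially the paper's argument: the paper observes that the coded dinucleotides $\{C,\bar C\}\times\{G,\bar G\}$ at sites $(i,i+1)$ evolve autonomously as a $4$-state Markov chain, checks via Kolmogorov's cycle criterion on its transition graph that this lumped chain is reversible even though the full process is not, and deduces $[C,C](t)=(C,C)(2t)$ exactly as you propose --- your ``internal symmetry of the closed system under the forward/backward exchange'' is that reversibility. One small correction: the closure of the hierarchy is not a consequence of the $2$-dependence of $\pi$ (a property of the stationary law), but of the structural autonomy of the $\{C,\bar C\}\times\{G,\bar G\}$-coded dynamics, which is the solvability result of \cite{piau:solv}.

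For $A$ you genuinely diverge from the paper, which instead computes $[A,A](t)$ explicitly, notes that the relevant coding $\{C,\bar C\}\times\{A,G,Y\}$ is \emph{not} reversible (the cycle $CA\to CY\to CG\to CA$ violates the cycle condition), and concludes from the presence of nonzero coefficients on the extra modes $\ee^{-(10+r\pm u)t}$, which cannot occur in $(A,A)(2t)$. Your local alternative is legitimate and arguably cleaner, and its decisive quantity does come out nonzero: with $Lf_A=1-4f_A+r\1\{X_{-1:0}=CG\}$ one gets that the second Taylor coefficient reduces to a net probability current, $2r\left[(1+r)(CG)_*-(CA)_*\right]$, which is a positive multiple of $r^2$ since $(CA)_*=(16+7r)/(16(16+5r))$. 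But as written this nonvanishing is asserted rather than proved, and it is not automatic --- the quantity is of the form $\|Lf_A\|^2-\langle L^2f_A,f_A\rangle$ and has no a priori sign, so the verification is the whole content of the step. Moreover a nonzero second derivative at $0$ only yields $[A,A](t)\ne(A,A)(2t)$ for small $t$; your closing appeal to ``a difference of constant sign for all $t>0$'' defers precisely to the explicit solution of the linear system, i.e.\ to the computation the paper actually performs (and even there, passing from ``not identically equal'' to ``unequal for every positive $t$'' deserves a word about the zeros of the exponential polynomial). In short: the $C$ half is complete and matches the paper; the $A$ half is a sensible alternative sketch whose key computation is left undone.
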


In the appendix, theorem~\ref{theo:finalRN} provides an asymptotic confidence interval for our estimation procedure of the time elapsed 
between a present sequence and an ancestral one, for RN+YpR models, under the condition that the estimator is well-defined in the general case.

The keystep for the creation of phylogenetic trees built by a distance-based method is theorem~\ref{theo:finalC} below. At the moment, a prior knowledge of the parameter $r$ is needed to apply the method. We hope in the future to perform simulations and/or to find a mathematical
method to estimate parameter~$r$.

We now introduce some notations needed to state theorem~\ref{theo:finalC} and used in the rest of the paper.


\begin{defi} \label{defi:xx}
Let $ ( x,x )_\mathrm{obs} $ and $ [x,x]_\mathrm{obs} $ denote for every $ x \in \{A,C\} $ the observed value of 
$ ( x,x ) $ and $ [x,x] $ on two aligned sequences of length $N$, that is,
\[
( x,x )_\mathrm{obs}=\frac1{N} \sum_{i=1}^NK^x_i(t),\quad\mbox{with}\quad
K^x_i(t)=\1\{X_i(0)=X_i(t)=x\} ,
\]
and
\[
[x,x]_{\mathrm{obs}}=\frac1{N} \sum_{i=1}^N  \widetilde{K}^x_i(t),\quad\mbox{with}\quad  \widetilde{K}^x_i(t)=\1\{X_i^1(t)=X_i^2(t)=x\} .
\]
\end{defi}
 
In figure~\ref{figu:alig} for instance, $N=7$ and $ ( C,C )_\mathrm{obs}=\frac27 $.

\begin{defi} \label{defi:TC}
Let $ T_x $ and $\widetilde{T}_x$ denote the estimators of the elapsed time and the divergence time respectively, defined for every $x\in\{A,C\}$,  
as the solution in $t$ of the equations
\[
( x,x ) (t)= ( x,x )_\mathrm{obs} \quad \mbox{and} \quad [ x,x ] (t)= [ x,x ]_\mathrm{obs}.
\]
For $ x \in \{A,C\}$,
let $\kappa^x_\mathrm{obs}$, $ \widetilde{\kappa}^x_{\mathrm{obs}} $, $\nu^x_\mathrm{obs}$ and $ \widetilde{\nu}^x_{\mathrm{obs}} $ denote observed quantities, defined as
\begin{align*}
  \kappa^C_\mathrm{obs} &= 4( C,C )_\mathrm{obs} + r ( C*, CG )_\mathrm{obs} - (C)_{\mathrm{obs}},\\
  \kappa^A_\mathrm{obs} &= 4( A,A )_\mathrm{obs} - r ( *A, CG )_\mathrm{obs} - (A)_{\mathrm{obs}},\\
  \nu^x_\mathrm{obs} &= 
( x,x )_\mathrm{obs} - 5( x,x )_\mathrm{obs}^2 + 2 ( xx,xx )_\mathrm{obs} + 
2 ( x*x, x*x )_\mathrm{obs},
\end{align*}
and
\[
\widetilde{\kappa}^x_{\mathrm{obs}} = 2 \kappa^x_{\mathrm{obs}}, \qquad \widetilde{\nu}^x_{\mathrm{obs}}  = \nu^x_{\mathrm{obs}}.
\]
\end{defi}

We note that $\kappa^x_\mathrm{obs}$, $ \widetilde{\kappa}^x_{\mathrm{obs}} $, $\nu^x_\mathrm{obs}$ and $ \widetilde{\nu}^x_{\mathrm{obs}} $ may be negative for some sequences of observations and some lengths $ N $. However, from lemma~\ref{lemm:lgnCA} in section~\ref{sect:thfi}, $\kappa^x_\mathrm{obs}$, $ \widetilde{\kappa}^x_{\mathrm{obs}} $, $\nu^x_\mathrm{obs}$ and $ \widetilde{\nu}^x_{\mathrm{obs}} $ are almost surely positive when $ N $ is large.

As explained in sections~\ref{sect:evCA} and~\ref{sect:evac}, in the JC+CpG model, for every $ x \in \{A,C\} $, the functions 
$$ 
t \mapsto (x,x)(t),\quad\mbox{and}\quad t \mapsto [x,x](t), 
$$ 
are decreasing functions of $t\ge0$, from $ (x)_* $ at $t=0$ to $ (x)_*^2 $ at $t=+\infty$, where $ (x)_* $ denotes the frequency of $x$ at stationarity. Thus, $ T_x $ and $\widetilde{T}_x$ are unique and well defined for any pair of aligned sequences such that
\[
(x)_*^2<( x,x )_\mathrm{obs}<(x)_*.
\]
Thanks to
the ergodicity of the model, this condition is almost surely satisfied
when $ N $ is large enough because $( x,x )_\mathrm{obs} \to ( x,x )(t)$ and $[ x,x ]_\mathrm{obs} \to [ x,x ](t)$ almost surely
when $ N\to\infty $.

However, even if $T_x$ and $\widetilde{T}_x$ are unique and well defined, the formulas to compute them are not straightforward since they involve inverting a function. Thus, to solve equation $( x,x ) (t)= ( x,x )_\mathrm{obs}$, for example, one has to rely on numerical methods. Fortunately, explicit formulas for $(x,x)(t)$ and $[x,x](t)$ in the JC+CpG model do exist.

We now state our main result.

\begin{theo} \label{theo:finalC}
Assume that the ancestral sequence is at stationarity. Then, in the JC+CpG model, for every $ x \in \{A,C\} $,  
when $ N\to+\infty $, 
$$\kappa^x_\mathrm{obs}\sqrt{N/\nu^x_\mathrm{obs}}(T_x-t)\quad\mbox{and}\quad 
\widetilde{\kappa}^C_\mathrm{obs}\sqrt{N/\widetilde{\nu}^C_\mathrm{obs}}(\widetilde{T}_C - t)
$$ 
both converge in distribution to
  the standard normal law. 
An asymptotic confidence interval at level $ \varepsilon $ for the elapsed time  is
\[
  \left[ 
  T_x -
\frac{z(\varepsilon)}{\kappa^x_\mathrm{obs}}\sqrt{\frac{\nu^x_\mathrm{obs}}{N}}, 
T_x +
\frac{z(\varepsilon)}{\kappa^x_\mathrm{obs}}\sqrt{\frac{\nu^x_\mathrm{obs}}{N}}
\right].
\]
An asymptotic confidence interval at level $ \varepsilon $ for the time of divergence is
\[
  \left[ \widetilde{T}_x - \frac{z(\varepsilon)}{\widetilde{\kappa}^x_\mathrm{obs}}\sqrt{\frac{\widetilde{\nu}^x_\mathrm{obs}}{N}} ,
\widetilde{T}_x + \frac{z(\varepsilon)}{\widetilde{\kappa}^x_\mathrm{obs}}\sqrt{\frac{\widetilde{\nu}^x_\mathrm{obs}}{N}}  \right].
\]
In both formulas, $z(\varepsilon) $ denotes the unique real number such that $\Prob( | Z | \ge z(\varepsilon) ) = \varepsilon$ with $ Z $ a standard normal random variable.
\end{theo}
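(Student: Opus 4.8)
The plan is to express each estimator as a smooth functional of empirical frequencies, to prove a central limit theorem for those frequencies, and then to transfer it to the estimators by the delta method together with Slutsky's lemma. I carry out the argument for the elapsed time $T_x$ in full and indicate at the end how the divergence time $\widetilde T_C$ is handled through the identity $[C,C](t)=(C,C)(2t)$.

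First I would prove a central limit theorem for $(x,x)_{\mathrm{obs}}=\frac1N\sum_{i=1}^N K^x_i(t)$. The decisive point is that the two-time indicator $K^x_i(t)=\1\{X_i(0)=X_i(t)=x\}$ is, through the representation $\Xi_i=\Psi(\xi_{i-1},\xi_i,\xi_{i+1})$ and the locality of the dynamics, a measurable function of the three clocks $(\xi_{i-1},\xi_i,\xi_{i+1})$; hence $(K^x_i(t))_i$ is a stationary $2$-dependent family, with $K^x_i(t)$ and $K^x_j(t)$ independent whenever $|i-j|\ge3$. The law of large numbers of lemma~\ref{lemm:lgnCA} gives $(x,x)_{\mathrm{obs}}\to(x,x)(t)$, and the classical central limit theorem for $2$-dependent stationary sequences shows that $\sqrt N\,((x,x)_{\mathrm{obs}}-(x,x)(t))$ converges to a centred Gaussian of variance $\nu^x=\var(K^x_0)+2\cov(K^x_0,K^x_1)+2\cov(K^x_0,K^x_2)$, the sum truncating at lag two by $2$-dependence. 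Writing each covariance as an expectation of a product of indicators gives $\var(K^x_0)=(x,x)(t)-(x,x)(t)^2$, $\cov(K^x_0,K^x_1)=(xx,xx)(t)-(x,x)(t)^2$ and $\cov(K^x_0,K^x_2)=(x*x,x*x)(t)-(x,x)(t)^2$, so that $\nu^x=(x,x)(t)-5(x,x)(t)^2+2(xx,xx)(t)+2(x*x,x*x)(t)$, which is exactly the population value of $\nu^x_{\mathrm{obs}}$, and $\nu^x_{\mathrm{obs}}\to\nu^x$ by the same law of large numbers.

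Second I would identify the normalising constant as the slope of the frequency curve. Applying the generator of the process to the time-$t$ observable $\1\{X_0(\cdot)=x\}$, with $\1\{X_0(0)=x\}$ frozen, only substitutions at site $0$ contribute: a gain at rate $1$ from each non-$x$ state, a loss at the global rate $3$, and the CpG correction of rate $r$ carried by the dinucleotide $CG$. For $x=C$ this yields $(C,C)'(t)=(C)_*-4(C,C)(t)-r(C*,CG)(t)$, while for $x=A$ the extra influx $G\to A$ in a $CG$ context yields $(A,A)'(t)=(A)_*-4(A,A)(t)+r(*A,CG)(t)$. In both cases $-(x,x)'(t)$ equals the population version $\kappa^x$ of $\kappa^x_{\mathrm{obs}}$, and $\kappa^x_{\mathrm{obs}}\to\kappa^x$ by lemma~\ref{lemm:lgnCA}; in particular $(x,x)$ is strictly decreasing, which is what makes $T_x$ well defined and $\kappa^x>0$.

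Finally I would conclude by the delta method. As $(x,x)$ is a strictly decreasing $C^1$ function, $T_x=(x,x)^{-1}((x,x)_{\mathrm{obs}})$ is consistent and $\sqrt N\,(T_x-t)=(1/(x,x)'(t)+o(1))\,\sqrt N\,((x,x)_{\mathrm{obs}}-(x,x)(t))$, which converges to a centred Gaussian of variance $\nu^x/(\kappa^x)^2$; replacing $\kappa^x,\nu^x$ by the consistent $\kappa^x_{\mathrm{obs}},\nu^x_{\mathrm{obs}}$ and invoking Slutsky's lemma gives the convergence of $\kappa^x_{\mathrm{obs}}\sqrt{N/\nu^x_{\mathrm{obs}}}\,(T_x-t)$ to the standard normal law, and inverting this statement with the quantile $z(\varepsilon)$ produces the stated interval. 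For the divergence time, $[C,C](t)=(C,C)(2t)$ gives $[C,C]'(t)=2(C,C)'(2t)$, so the slope doubles to $\widetilde\kappa^C=2\kappa^C$, while the same $2$-dependence of the two independently evolved present sequences leaves the variance unchanged, $\widetilde\nu^C=\nu^C$, and the three steps above apply verbatim. I expect the first step to be the main obstacle: establishing that the two-time indicators inherit the $2$-dependence of the stationary field uniformly in $t$, so that the central limit theorem is available and its covariance sum truncates at lag two to give exactly $\nu^x$.
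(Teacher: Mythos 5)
Your overall route is the one the paper follows: compute the mean and variance of $(x,x)_{\mathrm{obs}}$ exactly using the $2$-dependence coming from the representation $\Xi_i=\Psi(\xi_{i-1},\xi_i,\xi_{i+1})$, obtain a CLT for the empirical frequencies, identify $-(x,x)'(t)$ with the limit of $\kappa^x_{\mathrm{obs}}$ via the differential equations $(C,C)'(t)=(C)_*-4(C,C)(t)-r(C*,CG)(t)$ and $(A,A)'(t)=(A)_*-4(A,A)(t)+r(*A,CG)(t)$, and conclude by the delta method and Slutsky's lemma, treating the divergence time through $[C,C](t)=(C,C)(2t)$. Your variance formula and truncation of the covariance sum at lag two match lemma~\ref{lemm:moyCA} and proposition~\ref{prop:normCA}. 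The only methodological difference is that you invoke the classical CLT for $m$-dependent stationary sequences where the paper applies the Hall--Heyde theorem~\ref{theo:hall}; since the paper's verification of conditions (i) and (ii) reduces precisely to the $2$-dependence, the two are interchangeable here.

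There is, however, one genuine gap. You write that ``in particular $(x,x)$ is strictly decreasing, which is what makes $T_x$ well defined and $\kappa^x>0$'', as if this followed from the generator computation. It does not: the right-hand side $(x)_*-4(x,x)(t)-r(C*,CG)(t)$ is not manifestly negative, and in fact it tends to $0$ as $t\to+\infty$ (at stationarity $4(C)_*^2+r(C)_*(CG)_*=(C)_*$), so positivity of $\kappa^x=-(x,x)'(t)$ for every finite $t$ is exactly as delicate as the strict monotonicity itself. Both the well-definedness of $T_x=\mu_x((x,x)_{\mathrm{obs}})$ and the applicability of the delta method (which needs $\mu_x$ differentiable, i.e.\ $(x,x)'(t)\neq 0$) hinge on $t\mapsto(x,x)(t)$ being a decreasing diffeomorphism. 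The paper obtains this by exploiting the autonomous evolution of the dinucleotides coded in $\{C,\bar C\}\times\{G,\bar G\}$, solving the resulting linear system explicitly, and checking that the coefficients $c_0,c_\pm$ (resp.\ $a_0,a_\pm$) of $\ee^{-4t}$, $\ee^{-u_\pm t}$ in corollaries~\ref{coro:evoC} and~\ref{coro:evoA} are positive (propositions~\ref{prop:mon1} and~\ref{prop:mon2}). Your proposal needs this explicit step, or some substitute for it, to close; note also that for $[A,A]$ the analogous monotonicity is only conjectural (conjecture~\ref{prop:mon3}), which is why the theorem restricts the divergence-time CLT to $\widetilde T_C$ — your decision to treat only $\widetilde T_C$ is therefore the right one.
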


Theorem~\ref{theo:finalC} implies that, for large $N$, the width of the confidence interval scales as $N^{-1/2}$ times a function of $t$, and that, for large $t$, this function scales as $\ee^{4t}$ for the time elapsed and as $\ee^{8t}$ for the time of divergence, according to formulas given in corollaries~\ref{coro:evoC} and~\ref{coro:evoA}. Heuristically, this means that, to estimate the time $ t $ up to a given factor, one must observe a part of the sequence of length $ N $ at least of order $ \ee^{8t} $ for the time elapsed and at least of order $ \ee^{16t} $ for the time of divergence.

The rest of the paper is organized as follows. In section~\ref{sect:CLT}, we state central limit theorems for the time estimators for the Jukes-Cantor model with CpG influence and for the general model under conjecture~\ref{conj:diff}. 
In section~\ref{sect:thfi}, we show that the central limit theorems established in section~\ref{sect:CLT} imply theorem~\ref{theo:finalC} of section~\ref{sect:main}. 
In section~\ref{sect:evCA}, and~\ref{sect:evac}, we characterize the evolutions of $ ( C,C )(t) $ and $[C,C](t)$, and in section~\ref{sect:evac} the evolutions of $ ( A,A )(t) $and  $ [A,A](t) $. We state some monotonicity properties in these two sections. 

In appendix~\ref{appe:deRN}, we give a short description of the general RN model with YpR influence. In appendix~\ref{appe:resu}, we give an  extension of theorem~\ref{theo:finalC} to the general model under conjecture~\ref{conj:diff}, and in appendix~\ref{appe:just} the justification of this extension. In appendix~\ref{appe:simu}, we describe some simulations supporting our conjecture~\ref{conj:diff}.


\section{Central limit theorems for time estimators} \label{sect:CLT}

We give here central limit theorems for the time estimators in the general model. The strategy is the following. We first deal with $(x,x)_{\mathrm{obs}}$ and $[x,x]_{\mathrm{obs}}$. We compute exactly the variance of these quantities thanks to the $2$-dependence. Then, we use a central limit theorem for mixing sequences. To state central limit therorem for the time estimators, we use the delta method, and to do that, we need to know that $t \mapsto (x,x)(t)$ and $t \mapsto [x,x](t)$ are diffeomorphisms. This is still a conjecture for the general model whereas we prove it for the JC+CpG model.

\subsection{Variance computations} \label{sect:obse}

We detail the properties of $( C,C )_{\mathrm{obs}}$, $( A,A )_{\mathrm{obs}}$, $[C,C]_{\mathrm{obs}}$ and $[A,A]_{\mathrm{obs}}$. We assume that $ N \geq 2 $. 

\begin{lemm} \label{lemm:moyCA}
In the general RN+YpR model, for $ x \in \{C,A\} $, the mean of $ ( x,x )_\mathrm{obs} $, respectively $ [ x,x ]_\mathrm{obs} $, with respect to $\pi$ is $ ( x,x )(t) $,  respectively $[x,x](t)$. 

The
  variances of $ ( x,x )_\mathrm{obs} $ and $ [x,x]_\mathrm{obs}$ with respect to $\pi$ are both equal to $\sigma^2_x(N,t)$, where
\begin{align*}
  N\sigma^2_x(N,t) = &  ( x,x )(t) - ( x,x )(t)^2 
  + 2(1-1/N) \big( ( xx, xx )(t) - ( x,x )(t)^2 \big)+\\
  & \qquad\qquad{} + 2(1-2/N) 
\big( ( x*x, x*x )(t) - ( x,x )(t)^2 \big).
\end{align*}
\end{lemm}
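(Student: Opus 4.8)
### Proof Strategy

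The plan is to treat the mean and the variance separately, exploiting the indicator structure of $K^x_i(t)$ and $\widetilde K^x_i(t)$ together with the translation-invariance and $2$-dependence of $\pi$. First I would compute the means. Since $(x,x)_{\mathrm{obs}}=\frac1N\sum_{i=1}^N K^x_i(t)$ with $K^x_i(t)=\1\{X_i(0)=X_i(t)=x\}$, linearity of expectation gives $\E_\pi[(x,x)_{\mathrm{obs}}]=\frac1N\sum_{i=1}^N\Prob_\pi(X_i(0)=X_i(t)=x)$. By translation-invariance of the stationary law this probability is independent of $i$ and equals exactly the frequency $(x,x)(t)$ defined earlier (and similarly $\E_\pi[[x,x]_{\mathrm{obs}}]=[x,x](t)$ for the two-sequence version using $\widetilde K^x_i(t)$). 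This part is essentially bookkeeping.

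The heart of the lemma is the variance. I would write $N^2\var_\pi\big((x,x)_{\mathrm{obs}}\big)=\sum_{i,j=1}^N\cov_\pi\!\big(K^x_i(t),K^x_j(t)\big)$ and classify the terms by the gap $|i-j|$. Because each $K^x_i(t)$ depends only on the single site $i$ at two times, the relevant dependence is governed by the $2$-dependence property: the random variables $K^x_i(t)$ and $K^x_j(t)$ are independent as soon as $|i-j|\ge3$, so those covariances vanish. This leaves only the diagonal $i=j$, the nearest-neighbour terms $|i-j|=1$, and the next-nearest terms $|i-j|=2$. For each of these I would identify the covariance with the word-frequency quantities appearing in the statement: the diagonal term is $\var K^x_i=(x,x)(t)-(x,x)(t)^2$; the $|i-j|=1$ term produces $(xx,xx)(t)-(x,x)(t)^2$ (the event that sites $i,i+1$ are both occupied by $x$ at times $0$ and $t$); and the $|i-j|=2$ term produces $(x*x,x*x)(t)-(x,x)(t)^2$. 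Counting how many index pairs fall in each class over $\{1,\dots,N\}$ gives $N$ diagonal pairs, $2(N-1)$ pairs at gap $1$, and $2(N-2)$ pairs at gap $2$, which after dividing by $N^2$ and collecting terms yields exactly the stated formula with the $(1-1/N)$ and $(1-2/N)$ factors.

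For the two-sequence statistic $[x,x]_{\mathrm{obs}}$ I expect the identical computation to go through, giving the same $\sigma^2_x(N,t)$: the key observation is that the joint law of $(X^1,X^2)$ inherits the same $2$-dependence in the site index (the two daughter sequences share a common ancestor but the construction is coordinate-local in the same way), so the covariance structure and the combinatorial counting are unchanged. The main obstacle is verifying carefully that the covariances at gaps $1$ and $2$ are precisely the claimed word-frequency differences and that the edge effects (sites near $1$ and $N$, where a neighbour $i\pm1$ or $i\pm2$ falls outside the observation window) are accounted for correctly — this is exactly what the $(1-1/N)$ and $(1-2/N)$ corrections encode. I would handle this by matching each covariance to the definition of the frequencies $(W,W')(t)$ via translation-invariance and writing the pair counts explicitly, being attentive that, for instance, the $x*x$ configuration at gap $2$ requires the middle site to be arbitrary, which is precisely why the notation $x*x=\AA\times\{x\}\times\AA$ was introduced.
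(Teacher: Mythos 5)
Your proposal is correct and follows essentially the same route as the paper: compute the mean by translation invariance, expand $N^2\var$ into covariances, kill all terms with $|i-j|\ge 3$ by the $2$-dependence, identify the gap-$0$, gap-$1$ and gap-$2$ covariances with $(x,x)(t)-(x,x)(t)^2$, $(xx,xx)(t)-(x,x)(t)^2$ and $(x*x,x*x)(t)-(x,x)(t)^2$ respectively, and count $N$, $2(N-1)$ and $2(N-2)$ such pairs. The paper's proof is exactly this, with the identical remark that the same argument applies verbatim to $[x,x]_{\mathrm{obs}}$.
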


\begin{proof}
  The random variables $ (K^x_i(t))_{i\in\Z} $, respectively  $ (\widetilde{K}^x_i(t))_{i\in\Z} $, are Bernoulli random variables identically
  distributed with respect to $\pi$, 
their common mean is $ ( x,x ) (t) $, respectively $ [x,x](t)$, and $( x,x )_\mathrm{obs}$, respectively $[ x,x ]_\mathrm{obs}$, is the
  empirical mean of the $N$ values $ K^x_i(t) $, respectively $ \widetilde{K}^x_i(t) $, for $ i $ from $1 $ to $ N $. Thus, we
  obtain the value of $ \E(( x,x )_\mathrm{obs}) $, respectively $ \E([ x,x ]_\mathrm{obs}) $,  as $ ( x,x )(t) $, respectively $ [ x,x ](t) $. Furthermore,
\[
N^2 \sigma^2_x(N,t) = 
\sum_{i=1}^N\var(K^x_i(t))+2\sum_{1\leq i<j\leq N}\cov(K^x_i(t),K^x_j(t)).
\]
The variance of each $ K^x_i(t) $ is $
\var(K^x_1(t))=( x,x ) (t)- ( x,x )(t)^2 $. The $ 3 $-dependence, valid for any RN+YpR model,
implies that each covariance for $ |i-j|\geq 3 $ is zero. The
invariance by translation of $\pi$, valid for any RN+YpR model, shows that each of the $ (N-1) $
covariances such that $ i=j- 1 $ is
\[
\cov(K^x_1(t),K^x_2(t))=( xx,xx ) (t)- ( x,x )(t)^2.
\]
Finally, each of the $ (N-2) $ covariances such that $ i=j- 2 $ is
\[
\cov(K^x_1(t),K^x_3(t))=( x*x,x*x ) (t)- ( x,x )(t)^2.
\]
The same arguments hold for the variance of $ [x,x]_\mathrm{obs}$.
This concludes the proof.
\end{proof}

\subsection{Central limit theorems for $(x,x)_\mathrm{obs}$ and $[x,x]_\mathrm{obs}$}

To prove the convergence in distribution to the normal law, we use the
following result.

\begin{theo}[Hall and Heyde \cite{Hall:mart}] \label{theo:hall}
Let $ (V_i)_{i \in \Z} $ denote a stationary, ergodic, centered, square integrable sequence.  Let $ \mathcal{F}_0= \sigma(V_i\,;\, i \leq 0)$ denote the $\sigma$-algebra generated by the random variables $V_i$ for $i\leq0$.
For every positive integer $n$, introduce
\[ 
U_n=\frac1{\sqrt{n}}\sum_{i=1}^n V_i.
\]
Assume that
\begin{itemize}
\item[\rm (i)] for every positive $n$, the series $ \displaystyle{
    \sum_{k\geq1 } \E(V_k \E(V_n | \mathcal{F}_0 ) ) } $ converges,
\item[\rm (ii)] the series $ \displaystyle \sum_{k \geq K} | \E(V_k
  \E(V_n | \mathcal{F}_0 ) ) | $ converges to zero when $ n\to+\infty
  $, uniformly with respect to $ K $.
\end{itemize}
Then $ \E(U_n^2) $ converges to a real number $ \sigma^2\geq0$
when $ n\to+\infty $. Furthermore, if $ \sigma^2 >0$, then
$ U_n/\sqrt{\sigma^2}$
converges in distribution to the standard normal distribution.
\end{theo}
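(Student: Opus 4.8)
The plan is to prove the central limit theorem by a martingale approximation in the style of Gordin, for which the hypotheses~(i)--(ii) are exactly tailored, and then to invoke the central limit theorem for stationary ergodic martingale differences. The first observation is the identity
\[
\E\big(V_k\,\E(V_n\mid\mathcal{F}_0)\big)=\E\big(\E(V_k\mid\mathcal{F}_0)\,\E(V_n\mid\mathcal{F}_0)\big),
\]
obtained by conditioning on $\mathcal{F}_0$ and using that $\E(V_n\mid\mathcal{F}_0)$ is $\mathcal{F}_0$-measurable. Writing $P_0=\E(\,\cdot\mid\mathcal{F}_0)$, the quantities appearing in (i)--(ii) are the scalar products $\langle P_0V_n,P_0V_k\rangle$ in $L^2$: hypothesis~(i) says that for each $n$ these are summable in $k$, and hypothesis~(ii) that their tails vanish as $n\to\infty$, uniformly in the truncation level. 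Since the sequence is ergodic, $P_0V_n\to0$ in $L^2$ as $n\to\infty$, which is consistent with, and refined by, (ii).

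The crux is to show that the corrector $g=\sum_{n\ge1}P_0V_n=\sum_{n\ge1}\E(V_n\mid\mathcal{F}_0)$ converges in $L^2$. For integers $M\le M'$ one has
\[
\Big\|\sum_{n=M}^{M'}P_0V_n\Big\|_2^2=\sum_{M\le n,k\le M'}\langle P_0V_n,P_0V_k\rangle,
\]
and I would bound the right-hand side using the uniform tail control~(ii) together with the summability~(i): for $M$ large the diagonal and near-diagonal contributions are small because $\|P_0V_n\|_2^2=\langle P_0V_n,P_0V_n\rangle\to0$, while the off-diagonal mass is dominated by the uniform tails $\sum_{k\ge M}|\langle P_0V_n,P_0V_k\rangle|$. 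This makes the partial sums $\sum_{n=1}^N P_0V_n$ a Cauchy sequence in $L^2$, so $g\in L^2$.

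Once $g\in L^2$ is available, I would run the Gordin construction. Writing $\theta$ for the shift, so that $V_i=V_0\circ\theta^i$, set $D_i=V_i+g\circ\theta^i-g\circ\theta^{i-1}$. A direct computation, using $g\circ\theta^{k}=\sum_{j\ge1}\E(V_{k+j}\mid\mathcal{F}_{k})$, shows that $\E(D_i\mid\mathcal{F}_{i-1})=0$, so that $(D_i)_i$ is a stationary, ergodic, square integrable sequence of martingale differences with respect to $(\mathcal{F}_i)_i$. Telescoping yields
\[
\sum_{i=1}^nV_i=\sum_{i=1}^nD_i+\big(g-g\circ\theta^n\big),
\]
and the remainder is bounded in $L^2$ by $2\|g\|_2$, hence $o_P(\sqrt n)$. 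Consequently $\E(U_n^2)=\E\big(D_1^2\big)+o(1)$, which proves the convergence of $\E(U_n^2)$ to $\sigma^2:=\E(D_1^2)\ge0$, and $U_n$ has the same limit in distribution as $n^{-1/2}\sum_{i=1}^nD_i$. The latter converges to the centered normal law of variance $\E(D_1^2)$ by the central limit theorem for stationary ergodic martingale differences: the conditional variances $\frac1n\sum_{i=1}^n\E(D_i^2\mid\mathcal{F}_{i-1})$ converge almost surely to $\E(D_1^2)$ by Birkhoff's ergodic theorem, and the conditional Lindeberg condition follows from square integrability. Therefore $U_n/\sqrt{\sigma^2}$ converges to the standard normal law whenever $\sigma^2>0$.

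The main obstacle is the $L^2$ convergence of the corrector $g$ in the second step: hypothesis~(i) by itself only controls $\sum_k\langle P_0V_n,P_0V_k\rangle$ for each fixed $n$ and does not make the partial sums over $n$ a Cauchy sequence, and it is precisely the uniformity in the truncation level built into~(ii) that upgrades the pointwise summability to the $L^2$ convergence needed both for the martingale approximation and for the identification of the limiting variance $\sigma^2$ as $\E(D_1^2)$.
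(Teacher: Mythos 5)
The paper does not prove this statement --- it is quoted from Hall and Heyde with a citation --- so your argument has to stand on its own. Its architecture (martingale approximation plus the CLT for stationary ergodic martingale differences) is the right one, but the central step fails. You claim that (i) and (ii) force the corrector $g=\sum_{n\ge1}\E(V_n\mid\mathcal{F}_0)$ to converge in $L^2$. The Cauchy estimate you sketch does not deliver this: setting $\epsilon_n=\sum_{k\ge1}\bigl|\langle P_0V_n,P_0V_k\rangle\bigr|$, hypothesis (ii) gives $\epsilon_n\to0$, and your bound on $\bigl\|\sum_{n=M}^{M'}P_0V_n\bigr\|_2^2$ is at best $\sum_{n=M}^{M'}\epsilon_n$, a sum of $M'-M+1$ individually small terms which need not be small; already the diagonal $\sum_{n=M}^{M'}\|P_0V_n\|_2^2$ can diverge, since (ii) only yields $\|P_0V_n\|_2^2\le\epsilon_n\to0$, not summability. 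The claim is in fact false, not merely unproved. Take the causal linear process $V_i=\sum_{j\ge0}a_j\xi_{i-j}$ with i.i.d.\ centered unit-variance $\xi$, $a_j=j^{-5/4}$ for $j\ge1$, and the innovation filtration: then $\langle P_0V_n,P_0V_k\rangle=\sum_{m\ge0}a_{n+m}a_{k+m}$, conditions (i) and (ii) hold (the full sum over $k$ is $O(n^{-1/2})$), yet $\bigl\|\sum_{n=1}^NP_0V_n\bigr\|_2^2=\sum_{\ell\ge1}\bigl(\sum_{n=1}^Na_{n+\ell-1}\bigr)^2\asymp\sum_{\ell\le N}\ell^{-1/2}\to\infty$. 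Your single-corrector construction is exactly Gordin's and requires the strictly stronger hypothesis $\sum_{n\ge1}\|\E(V_n\mid\mathcal{F}_0)\|_2<\infty$; conditions (i)--(ii) were designed precisely to go beyond it.

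The repair is the one in Hall and Heyde's own proof: replace the single corrector by a family of stationary ergodic martingale-difference approximations indexed by a truncation level $m$, for instance $Y_{m,i}=\sum_{k=0}^{m}\bigl(\E(V_{i+k}\mid\mathcal{F}_i)-\E(V_{i+k}\mid\mathcal{F}_{i-1})\bigr)$; use (i)--(ii) to prove the double-limit estimate $\lim_{m\to\infty}\limsup_{n\to\infty}n^{-1}\E\bigl[\bigl(\sum_{i=1}^nV_i-\sum_{i=1}^nY_{m,i}\bigr)^2\bigr]=0$ and the convergence of $\E(Y_{m,1}^2)$ to some $\sigma^2$ as $m\to\infty$; then conclude with the martingale CLT and a standard two-parameter approximation lemma. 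Everything downstream of your corrector (telescoping, Birkhoff for the conditional variances, the Lindeberg condition) is fine, but as written the proof does not establish the theorem under the stated hypotheses.
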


\begin{prop} \label{prop:normCA}
In the general RN+YpR model, for $ x \in \{C,A\}$, when $N\to+\infty$,  
$\sqrt{N} ( ( x,x )_\mathrm{obs}
  - ( x,x )(t) )$ and $\sqrt{N} ( [ x,x ]_\mathrm{obs}
  - [ x,x ](t) )$ both
converge in distribution to the centered normal
  distribution with variance $\sigma_x^2(t)$, where
\[
  \sigma_x^2(t) = ( x,x )(t) + 2 ( xx, xx )(t) + 2 ( x*x, x*x )(t) - 5 ( x,x )(t)^2.
\]
\end{prop}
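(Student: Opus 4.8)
The plan is to apply Theorem~\ref{theo:hall} (Hall and Heyde) to the stationary sequence obtained by centering the indicators $K^x_i(t)$, and symmetrically $\widetilde{K}^x_i(t)$. First I would set $V_i = K^x_i(t) - (x,x)(t)$, so that $\sqrt{N}\big((x,x)_\mathrm{obs} - (x,x)(t)\big) = U_N$ in the notation of the theorem. By the representation of $\pi$ via the map $\Psi$ recalled in section~\ref{sect:mode}, the sequence $(\Xi_i)_{i\in\Z}$ is stationary and ergodic under $\pi$, and since each $V_i$ is a bounded measurable function of finitely many coordinates, the sequence $(V_i)_{i\in\Z}$ inherits stationarity, ergodicity, centering, and square integrability. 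So the standing hypotheses of Theorem~\ref{theo:hall} are immediate.

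The heart of the matter is verifying conditions~(i) and~(ii), and here the $2$-dependence (equivalently $3$-dependence at the level of indices $|i-j|\ge 3$) does essentially all the work. Because $V_k$ and $V_n$ are independent as soon as the sites they depend on are separated by at least three, each inner expectation $\E(V_k\,\E(V_n\mid\mathcal{F}_0))$ vanishes for all but finitely many $k$: concretely, $V_n = K^x_n(t) - (x,x)(t)$ depends on sites near $n$, and conditioning on $\mathcal{F}_0 = \sigma(V_i\,;\,i\le 0)$ together with the finite range of dependence forces $\E(V_n\mid\mathcal{F}_0)$ to be a function supported on a bounded neighbourhood of the origin, so that $\E(V_k\,\E(V_n\mid\mathcal{F}_0))=0$ once $k$ is large. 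Thus the series in~(i) is actually a finite sum and converges trivially, and the tail in~(ii) is eventually zero, so it converges to zero uniformly in $K$. This is the step I expect to require the most care, not because it is deep but because one must track exactly which finite sets of sites enter each term to justify the vanishing; the payoff is that the mixing hypotheses reduce to a finite-range bookkeeping.

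It then follows from Theorem~\ref{theo:hall} that $\E(U_N^2)$ converges to some $\sigma^2\ge 0$ and, provided $\sigma^2>0$, that $U_N/\sqrt{\sigma^2}$ converges in distribution to the standard normal law. Finally I would identify $\sigma^2$ with the announced $\sigma_x^2(t)$: passing to the limit $N\to\infty$ in the exact variance formula $N\sigma_x^2(N,t)$ of Lemma~\ref{lemm:moyCA}, the correction factors $(1-1/N)$ and $(1-2/N)$ tend to $1$, yielding
\[
\sigma_x^2(t) = (x,x)(t) + 2(xx,xx)(t) + 2(x*x,x*x)(t) - 5(x,x)(t)^2.
\]
Positivity of $\sigma_x^2(t)$ holds on the relevant range (as guaranteed almost surely for large $N$ by Lemma~\ref{lemm:lgnCA}, which controls the sign of $\nu^x_\mathrm{obs}$), so the nondegenerate conclusion applies. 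The argument for $[x,x]_\mathrm{obs}$ is identical word for word, using $\widetilde{K}^x_i(t)$ in place of $K^x_i(t)$ and the fact that the two sequences $X^1,X^2$ sharing a common stationary ancestor again yield a $2$-dependent stationary ergodic family; the same variance computation from Lemma~\ref{lemm:moyCA} gives the same limiting variance, which completes the proof.
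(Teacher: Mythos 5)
Your proposal follows essentially the same route as the paper: center the indicators, apply the Hall--Heyde theorem, kill conditions (i) and (ii) via the finite-range ($2$-dependence) structure, and identify the limiting variance by letting $N\to\infty$ in the exact formula of Lemma~\ref{lemm:moyCA}. The only cosmetic difference is that the paper phrases the key vanishing more sharply --- $\E(V_n\mid\mathcal{F}_0)=\E(V_n)=0$ for $n\ge 3$, and $\E(V_k\,\E(V_n\mid\mathcal{F}_0))=0$ for $k\ge 3$ by independence of $V_k$ from $\mathcal{F}_0$ --- which is the clean way to get the uniformity in $K$ required by (ii), but your argument is the same in substance.
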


\begin{proof}
For any RN+YpR model, for $ x \in \{C,A\} $, the sequence $ ( K^x_i(t) )_{i \in \Z} $, respectively $ (\widetilde{K}^x_i)_{i \in \Z}  $, is stationary and ergodic.
  Let $ V_i^x=K^x_i(t)-( x,x )(t) $, respectively $\widetilde{V}_i^x=\widetilde{K}^x_i - [x,x](t)$. This defines a sequence $(V_i^x)_{i\in\Z}$, respectively $ (\widetilde{V}^x_i)_{i \in \Z} $,
  such that the first hypothesis of theorem~\ref{theo:hall} holds. We
  now check conditions (i) et (ii). The $ 3 $-dependence, valid for any RN+YpR model,
  implies that, for every $ n \geq 3 $, $ \E(V_n^x | \mathcal{F}_0^x ) =
  \E(V_n^x) = 0$, respectively $ \E(\widetilde{V}_n^x | \mathcal{\widetilde{F}}_0^x ) =
  \E(V_n^x) = 0$. Hence we only have to check the cases $ n=1 $ and $
  n=2 $.

  For every $ k \geq 3 $, $ V_k^x $, respectively $ \widetilde{V}_k^x $, is independent of $ \mathcal{F}_0^x $, respectively $ \mathcal{\widetilde{F}}_0^x $,
  and $\E(V_n^x | \mathcal{F}_0^x )$, respectively $\E(\widetilde{V}_n^x | \mathcal{\widetilde{F}}_0^x )$, is $\mathcal{F}_0^x$-measurable, respectively $\mathcal{\widetilde{F}}_0^x$-measurable,  hence
\[
  \E(V_k^x \E(V_n^x | \mathcal{F}_0^x ) ) = \E(V_k^x) \E(\E(V_n^x | \mathcal{F}_0^x )) = 0,
\]
and
\[
  \E(\widetilde{V}_k^x \E(\widetilde{V}_n^x | \mathcal{\widetilde{F}}_0^x ) ) = \E(\widetilde{V}_k^x) \E(\E(\widetilde{V}_n^x | \mathcal{\widetilde{F}}_0^x )) = 0.
\]
This implies (i) and (ii), hence theorem~\ref{theo:hall} applies. 

To
compute the asymptotic variance in the theorem, we note that the
variances of $\sqrt{N} (( x,x )_\mathrm{obs} - ( x,x )(t) )$ and $\sqrt{N} ([ x,x ]_\mathrm{obs} - [ x,x ](t) )$ are both
$ N\sigma^2_x(N,t)$, which converges to $ \sigma_x^2(t) $ when $N\to+\infty$.
\end{proof}

\subsection{Central limit theorems for $ T_x$ and $\widetilde{T}_x$}
We describe explicitly the behaviour of $ T_x - t $ and $\widetilde{T}_x - t$.
To state our result, we use the central limit theorems given in proposition~\ref{prop:normCA}, but we now need to treat separately the JC+CpG model
 and the general RN+YpR model.

For $ x \in \{C,A\}$, let $ \mu_x $, respectively $\widetilde{\mu}_x$, denote the inverse function of $ t \mapsto ( x,x )(t) $, respectively $t \mapsto [x,x](t)$. That is, 
$$
t=\mu_x( ( x,x )(t))=\widetilde{\mu}_x([x,x](t)),
$$
and $ \mu_x $ and $\widetilde{\mu}_x$ are both defined on the interval $((x)_*^2,(x)_*]$.


From propositions~\ref{prop:mon1},~\ref{prop:mon2} and~\ref{prop:mon3}, the functions $ t \mapsto ( x,x )(t) $ and $t \mapsto [x,x](t)$ are diffeomorphisms in the JC+CpG model. In the general RN+YpR model, this is only a conjecture, supported by simulations described in appendix~\ref{appe:simu}, showing that indeed, the function $t \mapsto (C,C)(t)$ is decreasing.

\begin{conj} \label{conj:diff}
In the RN+YpR model, for $x \in\{C,A\}$, the functions $ t \mapsto ( x,x )(t) $ and $t \mapsto [x,x](t)$  are diffeomorphisms from $[0,+\infty)$ to $((x)_*^2,(x)_*]$.
\end{conj}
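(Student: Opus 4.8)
The map $t\mapsto(x,x)(t)$ is a diffeomorphism onto $((x)_*^2,(x)_*]$ as soon as it is smooth, strictly decreasing with nowhere vanishing derivative, equal to $(x)_*$ at $t=0$ and converging to $(x)_*^2$ as $t\to+\infty$: the inverse function theorem then supplies a smooth inverse, and the boundary values together with the intermediate value theorem give surjectivity. The plan is to recast the four quantities in the Hilbert space $L^2(\pi)$. Let $(P_t)_{t\ge0}$ denote the semigroup of the process, $\mathcal{L}$ its generator, $\langle\cdot,\cdot\rangle$ the scalar product of $L^2(\pi)$, and $g=\1\{X_0=x\}$. Using the tower property for $(x,x)$, and conditioning on the common ancestor together with the conditional independence of the two present sequences for $[x,x]$, one obtains
\[
(x,x)(t)=\langle g,P_tg\rangle,\qquad [x,x](t)=\|P_tg\|^2 .
\]
Smoothness (indeed analyticity) in $t$ follows because $g$ is a bounded local function, hence lies in the domain of $\mathcal{L}$, and because the solvability results of \cite{piau:solv} express these correlations through a finite linear differential system; the boundary values are immediate, since $P_0g=g$ gives $(x)_*$ at $t=0$, while ergodicity gives $P_tg\to(x)_*$ in $L^2(\pi)$, whence both quantities tend to $(x)_*^2$.

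For the divergence-time statistic $[x,x](t)=\|P_tg\|^2$, monotonicity comes essentially for free. Since $\pi$ is invariant, $P_t$ is a contraction of $L^2(\pi)$ by Jensen's inequality, so $\frac{\dd}{\dd t}\|P_tg\|^2=2\langle P_tg,\mathcal{L}P_tg\rangle\le0$. The quadratic form $\langle h,\mathcal{L}h\rangle=\langle h,\mathcal{L}_sh\rangle$, where $\mathcal{L}_s$ is the symmetric part of $\mathcal{L}$, is the opposite of the Dirichlet form of a reversible Markov generator, hence vanishes only on constants once $\mathcal{L}_s$ is irreducible. The remaining task is thus to check that the symmetric part of the RN+YpR generator is ergodic and that $P_tg$ stays non-constant for every finite $t$, which should follow from the ergodicity of the model; this would give strict decrease and a strictly negative derivative, hence the diffeomorphism property for $[x,x]$.

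The genuinely hard case is the elapsed-time statistic $(x,x)(t)=\langle g,P_tg\rangle$. Here monotonicity is not automatic: for a non-reversible semigroup an autocorrelation $\langle g,P_tg\rangle$ may oscillate, and the inequality $[A,A](t)\neq(A,A)(2t)$ of the theorem in section~\ref{sect:main} precisely signals that the relevant dynamics is not self-adjoint for $x=A$. Were $P_t$ self-adjoint on $L^2(\pi)$, the spectral theorem would exhibit $t\mapsto(x,x)(t)-(x)_*^2$ as the Laplace transform of a nonnegative measure, hence completely monotone, strictly decreasing with nonvanishing derivative, and one would even recover $[x,x](t)=\langle g,P_{2t}g\rangle=(x,x)(2t)$. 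Lacking reversibility, the plan is to descend to the finite linear system of \cite{piau:solv} governing the correlations, write $(x,x)(t)-(x)_*^2=\sum_k c_k\,\ee^{-\lambda_k t}$ with $\lambda_k$ the eigenvalues of its matrix (all of nonnegative real part), and prove that the derivative $-\sum_k c_k\lambda_k\,\ee^{-\lambda_k t}$ never vanishes. The main obstacle is exactly this last point: ruling out the oscillatory contribution of the non-real eigenvalues, equivalently showing that this one-dimensional observable inherits enough reversible structure to forbid a sign change of the derivative. That is the content left open by the conjecture, for which the simulations of appendix~\ref{appe:simu} are the current evidence.
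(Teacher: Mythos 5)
The statement you are attacking is labelled a conjecture in the paper, and the paper does not prove it: it is established only for the JC+CpG model (and there only for $(C,C)$, $[C,C]$ and $(A,A)$; the monotonicity of $[A,A]$ is itself left as a separate conjecture), by explicitly diagonalizing the small autonomous dinucleotide systems and checking by hand that the coefficients $c_0,c_\pm$ and $a_0,a_\pm$ of the decaying exponentials are positive. Your proposal is likewise not a proof, and you say so yourself: for the elapsed-time statistic $(x,x)(t)=\langle g,P_tg\rangle$ the step you leave open --- ruling out a sign change of $-\sum_k c_k\lambda_k\,\ee^{-\lambda_k t}$ when the $\lambda_k$ may be non-real and the $c_k$ of either sign, for a non-self-adjoint $P_t$ --- is exactly the content of the conjecture, not a technicality. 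So there is a genuine gap, it is the same gap the authors face, and nothing in the functional-analytic reformulation removes it.

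That said, the $L^2(\pi)$ framing is a genuinely different route from the paper's, and it buys something real for the divergence-time half. Writing $[x,x](t)=\|P_tg\|^2$ and differentiating gives $\frac{\dd}{\dd t}\|P_tg\|^2=-2\mathcal{E}(P_tg,P_tg)\le0$, where $\mathcal{E}$ is the Dirichlet form of the symmetrized generator, uniformly over the non-reversible RN+YpR family; strictness then needs only (a) that $\mathcal{E}$ vanishes exactly on constants, which for these models should follow from the strict positivity of the single-site substitution rates, and (b) that $P_tg$ is never constant, which follows from real-analyticity of $t\mapsto\|P_tg\|^2-(x)_*^2$ (a finite exponential sum, by the autonomous finite systems of \cite{piau:solv}) together with its positivity at $t=0$. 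If you carry out (a) carefully --- domains and cores for the infinite-volume generator included --- this argument would settle the $[x,x]$ half of the conjecture in full generality, including the case $[A,A]$ that the paper leaves open even for JC+CpG, which would be a genuine improvement over the paper's case-by-case computations. But as submitted, both halves remain incomplete: the $[x,x]$ half because (a) is asserted rather than checked, and the $(x,x)$ half because the decisive monotonicity step is explicitly deferred to the conjecture itself.
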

Then,
\[
  T_x = \mu_x(( x,x )_\mathrm{obs}) \quad \mbox{and} \quad t=\mu_x(( x,x )(t)),
\]
and
\[
  \widetilde{T}_x = \widetilde{\mu}_x([ x,x ]_\mathrm{obs}) \quad \mbox{and} \quad t=\widetilde{\mu}_x([ x,x ](t)).
\]
Besides, the derivatives of $ \mu_x $ and $ \widetilde{\mu}_x $, with respect to $t $ are 
\[
  \mu'_x(( x,x )(t))= \frac1{( x,x )'(t)}
  \quad \mbox{and} \quad
  \widetilde{\mu}'_x([ x,x ](t))= \frac1{[ x,x ]'(t)}.
\]
Using the delta method, see \cite{vdvt:asst},
one gets the following result.
\begin{prop}\label{prop:TCA}
In the JC+CpG model, for $ x \in \{C,A\}$, when $N\to+\infty$, $\sqrt{N} (T_x - t )$, respectively $\sqrt{N} (\widetilde{T}_x - t )$, converges in distribution to
  the centered normal distribution with variance
  $ \sigma^2_x(t)/( x,x )'(t)^2  $, respectively $ {\sigma}^2_x(t)/[ x,x ]'(t)^2  $.

Under conjecture~\ref{conj:diff}, the same results hold for the RN+YpR model.
\end{prop}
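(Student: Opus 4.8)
The plan is to apply the delta method directly, using Proposition~\ref{prop:normCA} as the input central limit theorem and the inverse functions $\mu_x$ and $\widetilde{\mu}_x$ as the smooth transformations. The whole argument is a transfer of the asymptotic normality of the observed frequencies $(x,x)_\mathrm{obs}$ and $[x,x]_\mathrm{obs}$ to the estimators $T_x$ and $\widetilde{T}_x$ obtained by inverting the deterministic curves $t\mapsto(x,x)(t)$ and $t\mapsto[x,x](t)$.

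First I would rewrite the centered estimators in terms of the observed frequencies. Since $T_x = \mu_x(( x,x )_\mathrm{obs})$ and $t = \mu_x(( x,x )(t))$, we have
\[
\sqrt{N}(T_x - t) = \sqrt{N}\big(\mu_x(( x,x )_\mathrm{obs}) - \mu_x(( x,x )(t))\big),
\]
and likewise $\sqrt{N}(\widetilde{T}_x - t) = \sqrt{N}\big(\widetilde{\mu}_x([ x,x ]_\mathrm{obs}) - \widetilde{\mu}_x([ x,x ](t))\big)$. Next I would verify the hypotheses of the delta method. Proposition~\ref{prop:normCA} provides the convergence in distribution of $\sqrt{N}(( x,x )_\mathrm{obs} - ( x,x )(t))$ to the centered normal distribution with variance $\sigma_x^2(t)$, and analogously for $[ x,x ]_\mathrm{obs}$. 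The only remaining ingredient is the differentiability of $\mu_x$ at the point $( x,x )(t)$ with nonzero derivative. In the JC+CpG model, Propositions~\ref{prop:mon1}, \ref{prop:mon2} and~\ref{prop:mon3} guarantee that $t\mapsto( x,x )(t)$ and $t\mapsto[ x,x ](t)$ are diffeomorphisms, hence strictly monotone with nowhere-vanishing derivative; the inverses are then differentiable with $\mu_x'(( x,x )(t)) = 1/( x,x )'(t)$ and $\widetilde{\mu}_x'([ x,x ](t)) = 1/[ x,x ]'(t)$, both finite and nonzero.

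Applying the delta method then yields that $\sqrt{N}(T_x - t)$ converges in distribution to the centered normal distribution with variance $\sigma_x^2(t)/( x,x )'(t)^2$, and similarly $\sqrt{N}(\widetilde{T}_x - t)$ to the centered normal distribution with variance $\sigma_x^2(t)/[ x,x ]'(t)^2$. For the RN+YpR model, the argument goes through verbatim once the diffeomorphism property is granted, and that property is precisely conjecture~\ref{conj:diff}.

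I expect no serious obstacle in the JC+CpG case: the entire analytic difficulty has been displaced into the monotonicity propositions that establish the diffeomorphism property, and into Proposition~\ref{prop:normCA} that supplies the base central limit theorem. The only genuinely delicate point is that the delta method needs $( x,x )'(t)\neq0$, respectively $[ x,x ]'(t)\neq0$, both for $\mu_x$ to be differentiable and for the limiting variance to be finite; this is exactly what strict monotonicity provides, and it is also the reason the general RN+YpR statement must be conditioned on conjecture~\ref{conj:diff} rather than proved unconditionally.
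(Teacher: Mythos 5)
Your proposal is correct and follows essentially the same route as the paper: the paper likewise obtains Proposition~\ref{prop:TCA} by applying the delta method to the central limit theorem of Proposition~\ref{prop:normCA}, using the inverse functions $\mu_x$ and $\widetilde{\mu}_x$ whose differentiability comes from the diffeomorphism statements (Propositions~\ref{prop:mon1} and~\ref{prop:mon2}, and the conjectured monotonicity~\ref{prop:mon3} for $[A,A]$), with the general RN+YpR case conditioned on conjecture~\ref{conj:diff} exactly as you describe.
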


\section{Proofs of the results of section~\ref{sect:main} for JC + CpG models} \label{sect:thfi}


Proposition~\ref{prop:TCA} yields the variation of $ T_x $ and $\widetilde{T}_x$ around $ t $ for $x\in \{C,A\}$. A priori, to build a confidence interval  for $ t $ from this proposition requires to know the value of $(x,x)'(t)$, respectively $[x,x]'(t)$, and of ${\sigma}^2_x(t) $, which all depend on the
quantity $ t $ to be estimated.

As is customary, Slutsky's lemma (see \cite{vdvt:asst})
allows to bypass this difficulty through the observed
quantities $\kappa^x_\mathrm{obs}$ and $\nu^x_\mathrm{obs}$, respectively $\widetilde{\kappa}^x_\mathrm{obs}$ and $\widetilde{\nu}^x_\mathrm{obs}$, defined in section~\ref{sect:main}. Indeed, Slutsky's lemma states that if two sequences of random variables $(X_N)_N$ and $(Y_N)_N$ are such that $(X_N)_N$ converges in distribution to a random variable $X$ and $(Y_N)_N$ converges in probability to a constant $c$, then the sequence $(X_N Y_N)_N$ converges in distribution to the random variable $cX$.

\begin{lemm} \label{lemm:lgnCA}
In the JC+CpG model, for $ x \in \{C,A\} $, $\kappa^x_\mathrm{obs}\to- ( x,x )'(t)$, $\widetilde\kappa^x_\mathrm{obs}\to- [ x,x ]'(t)$  
and $\nu^x_\mathrm{obs}\to \sigma_x^2(t)$ 
almost surely when $ N\to+\infty $.
\end{lemm}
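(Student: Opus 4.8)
The plan is to derive every one of these convergences from a single principle: the observed quantities are empirical means of stationary ergodic sequences, so they converge almost surely by the ergodic theorem, after which it remains only to identify the limits. Each of $(x,x)_\mathrm{obs}$, $(xx,xx)_\mathrm{obs}$, $(x*x,x*x)_\mathrm{obs}$, $(C*,CG)_\mathrm{obs}$, $(*A,CG)_\mathrm{obs}$ and $(x)_\mathrm{obs}$ is of the form $\frac1N\sum_{i=1}^N f_i$, where $(f_i)_i$ is the indicator of a local event read off the translation-invariant ergodic joint field $(X(0),X(t))$. The sequences $(K^x_i(t))_i$ and $(\widetilde{K}^x_i(t))_i$ were already seen to be stationary and ergodic in the proof of proposition~\ref{prop:normCA}, and the remaining sequences are of exactly the same nature, so the strong law for ergodic sequences gives $(x,x)_\mathrm{obs}\to(x,x)(t)$, $(xx,xx)_\mathrm{obs}\to(xx,xx)(t)$, and so on, almost surely as $N\to+\infty$.

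First I would dispatch the variance terms, which are immediate. Since $\nu^x_\mathrm{obs}$ is a fixed polynomial in the observed frequencies, its almost sure limit is obtained by substituting the stationary frequencies, giving $(x,x)(t)-5(x,x)(t)^2+2(xx,xx)(t)+2(x*x,x*x)(t)$, which is by definition $\sigma_x^2(t)$ in proposition~\ref{prop:normCA}. As $\widetilde{\nu}^x_\mathrm{obs}=\nu^x_\mathrm{obs}$ and proposition~\ref{prop:normCA} assigns the same asymptotic variance $\sigma_x^2(t)$ to both $(x,x)_\mathrm{obs}$ and $[x,x]_\mathrm{obs}$, the claim $\widetilde{\nu}^x_\mathrm{obs}\to\sigma_x^2(t)$ follows at once.

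Next I would identify the limits of the $\kappa$ terms with the derivatives. The same substitution gives $\kappa^C_\mathrm{obs}\to 4(C,C)(t)+r(C*,CG)(t)-(C)_*$ and $\kappa^A_\mathrm{obs}\to 4(A,A)(t)-r(*A,CG)(t)-(A)_*$ almost surely. To recognise these as $-(x,x)'(t)$ I would invoke the forward (master) equations for $(C,C)(t)$ and $(A,A)(t)$ established in sections~\ref{sect:evCA} and~\ref{sect:evac}; balancing the rate of entering state $x$ against the neighbour-enhanced rate of leaving it yields $(C,C)'(t)=(C)_*-4(C,C)(t)-r(C*,CG)(t)$ and $(A,A)'(t)=(A)_*-4(A,A)(t)+r(*A,CG)(t)$, which match the limits term by term. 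For the divergence estimators I would use $\widetilde{\kappa}^x_\mathrm{obs}=2\kappa^x_\mathrm{obs}$ together with the relation between the bracket and parenthesis quantities: for $x=C$ the identity $[C,C](t)=(C,C)(2t)$, with its analogues for the other $C$-anchored frequencies, gives $[C,C]'(t)=2(C,C)'(2t)$ by the chain rule and shows that the two-sequence frequencies converge to the time-$2t$ values of the parenthesis quantities, whence $\widetilde{\kappa}^C_\mathrm{obs}=2\kappa^C_\mathrm{obs}\to-2(C,C)'(2t)=-[C,C]'(t)$; for $x=A$, where $[A,A](t)\neq(A,A)(2t)$, I would instead quote the evolution equation for $[A,A](t)$ derived directly in section~\ref{sect:evac}.

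The only genuinely model-specific input, and hence the main obstacle, is the derivation of these master equations, which requires an infinitesimal-generator computation keeping track of the $r$-enhanced $C\to T$ and $G\to A$ substitutions in CpG contexts, together with the contrast between $[C,C]$ and $[A,A]$. That content lives in sections~\ref{sect:evCA} and~\ref{sect:evac}; granting it, the present lemma reduces to the ergodic theorem followed by a continuous substitution of limits.
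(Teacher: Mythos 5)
Your proof is correct and follows essentially the same route as the paper's: almost sure convergence of the empirical frequencies by ergodicity, followed by identification of the limits through the differential equations for $(x,x)(t)$ and $[x,x](t)$ established in sections~\ref{sect:evCA} and~\ref{sect:evac}. The only cosmetic difference is that for $\widetilde{\kappa}^C_\mathrm{obs}$ you obtain $[C,C]'(t)=2(C,C)'(2t)$ from the reversibility identity and the chain rule, whereas the paper quotes the equation $[C,C]'(t)=-8[C,C](t)-2r[C*,CG](t)+2(C)_*$ directly; the two are equivalent.
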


\begin{proof}
The equalities  
\begin{align*}
  ( C,C )'(t) &= - 4 ( C,C ) (t) - r ( C*, CG ) (t) + (C)_*,\\
  ( A,A )'(t) &= - 4 ( A,A )(t) + r ( *A, CG ) (t) + (A)_*,
\end{align*}
given in sections~\ref{sect:evCA} and~\ref{sect:evac}, and the almost sure convergence of the observed
quantities $( C,C )_\mathrm{obs}$, $( C*, CG )_\mathrm{obs}$,
$( CC,CC )_\mathrm{obs}$, $( C*C,C*C )_\mathrm{obs}$, $( A,A )_\mathrm{obs}$, $( *A, CG )_\mathrm{obs}$, $( AA,AA )_\mathrm{obs}$ 
and $( A*A,A*A )_\mathrm{obs}$ to the
corresponding limiting values, when $N\to+\infty$, 
imply the desired convergences. 
Likewise, the equalities
\begin{align*}
  [ C,C ]'(t) &= - 8 [C,C](t) - 2 r [C*,CG](t) + 2(C)_*,\\
  [ A,A ]'(t) &= - 8 [A,A](t) + 2 r [*A,CG](t) + 2(A)_*,
\end{align*}
imply the convergence of $\widetilde\kappa^x_\mathrm{obs}$.
\end{proof}

We apply Slutsky's lemma to the sequence $(X_N)=(\sqrt{N}(T_x - t))$, respectively $(\widetilde{X}_N)=(\sqrt{N}(\widetilde{T}_x - t))$, which converges in distribution to the centered normal law with variance  $\sigma_x^2(t)/(x,x)'(t)^2$, respectively ${\sigma}_x^2(t)/[x,x]'(t)^2$, from proposition~\ref{prop:TCA}, and the sequence $(Y_N) = (\kappa^x_\mathrm{obs} / \sqrt{\nu^x_\mathrm{obs}})$, respectively  $(\widetilde{Y}_N) = (\widetilde{\kappa}^x_\mathrm{obs} / \sqrt{\widetilde{\nu}^x_\mathrm{obs}})$, which converges in probability to $-( x,x )'(t) / \sigma_x(t)$, respectively  $-[ x,x ]'(t) / {\sigma}_x(t)$, from lemma~\ref{lemm:lgnCA}. This implies theorem~\ref{theo:finalC}.

\section{Evolutions of $( C,C ) (t) $ and $[ C,C ](t)$ in JC+CpG models} \label{sect:evCA}

In the JC+CpG model, dinucleotides coded as $\{ C, \bar C \} \times \{ G, \bar G \} $ have autonomous evolution with the following $4 \times 4$ rate matrix $Q$:
\[
  \bordermatrix{
                    & CG     &\bar{C} G      & \bar{C} \bar{G}    &C \bar{G} \cr
    CG              & -(6+2r)& 3+r           & 0   & 3+r                     \cr
    \bar{C} G       & 1      & -4            & 3   & 0                       \cr
    \bar{C} \bar{G} & 0      & 1             & -2  & 1                       \cr
    C \bar{G}       & 1      & 0             & 3   & -4                      \cr}.
\]
The dynamics of the dinucleotides can be represented with the graph given in figure~\ref{figu:grapC}.

\begin{figure}[!ht] 
\begin{center}
\includegraphics{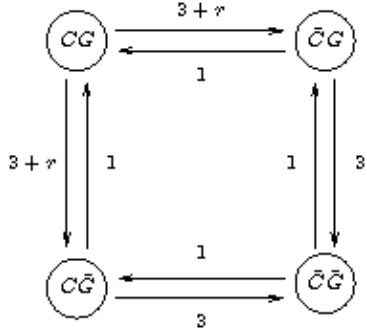}
\end{center}
\caption{Dynamics of dinucleotides encoded as $\{ C, \bar C \} \times \{ G, \bar G \} $}
\label{figu:grapC}
\end{figure}

The exponential of the corresponding matrix can be explicitly computed. One can also compute explicitly the stationary frequencies of dinucleotides coded as $\{ C, \bar C \} \times \{ G, \bar G \} $ using the same matrix. That is
\begin{align*}
 (CG)_* &= \frac{1}{16+5r},  \qquad (C \bar{G})_* = \frac{3+r}{16+5r}, \\
 (\bar{C} \bar{G})_* &= \frac{9+3r}{16+5r}, \qquad (\bar{C} G)_* = \frac{3+r}{16+5r}.
\end{align*}
These stationary frequencies have already been derived in~\cite{piau:solv} by Bérard, Gouéré and Piau.

We observe that $( C,C ) (t)$ can be expressed as a linear combination of terms of the form $( XY,ZT )(t)$ where $(X,Y)$ and $(Z,T)$ belong to $\{ C, \bar C \} \times \{ G, \bar G \} $.

It is then clear that an explicit expression for $( C,C )(t)$ can be obtained, and that an expression of $( C,C ) '(t)$ in terms of dinucleotide frequencies holds.

\begin{prop} \label{theo:evol1}
The evolution of $ ( C,C ) (t) $ satisfies the linear differential equation
\[
  ( C,C )'(t) = -4 ( C,C ) (t) - r ( C*,CG ) (t) + (C)(0).
\]
\end{prop}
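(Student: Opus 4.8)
The plan is to derive the differential equation for $(C,C)(t)$ directly from the generator of the Markov process, exploiting the autonomous dinucleotide dynamics given by the matrix $Q$. First I would fix the interpretation: $(C,C)(t)$ is the stationary probability that a site is occupied by $C$ at time $0$ and remains (or returns to) $C$ at time $t$, where the initial configuration is drawn from $\pi$. Since the quantity $(C,C)(t)$ is a frequency of a joint event at times $0$ and $t$, and the ancestral sequence is at stationarity, I would express it as $(C,C)(t) = \sum_{w} \pi(w)\, \Prob_w(\text{site is } C \text{ at time } t)$, where $w$ ranges over initial local configurations having $C$ at the relevant site. The key structural fact, already noted in the excerpt, is that whether a given $C$ site is lost depends only on its right neighbour (through the CpG mechanism), so the relevant local state is the dinucleotide at sites $(i,i+1)$, whose evolution is autonomous with rate matrix $Q$.

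The main computational step is to differentiate $(C,C)(t)$ in $t$ and feed in the forward Kolmogorov equation governing the dinucleotide marginal. Concretely, I would write the event $\{X_i(t)=C\}$ as the union over $\{X_{i:i+1}(t)=CG\}$ and $\{X_{i:i+1}(t)=C\bar G\}$, so that $(C,C)(t) = (\,C*, CG\,)(t) + (\,C*, C\bar G\,)(t)$, where the first coordinate $C*$ records that the site was $C$ at time $0$. Differentiating and using the rows of $Q$ corresponding to the outflow from states $CG$ and $C\bar G$, the loss terms are $-(6+2r)$ from $CG$ and $-4$ from $C\bar G$, while the gain terms come from transitions into these two states from $\bar C G$ and $\bar C \bar G$. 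I would then collect all terms and regroup them so that the gains and the state-independent part of the losses combine into $-4(C,C)(t)+(C)(0)$, and the extra CpG-specific rate $r$ produces exactly the term $-r(\,C*,CG\,)(t)$. The appearance of $(C)(0)$ rather than $(C)_*$ reflects that the constant gain term is controlled by the initial frequency of $C$, which at time $0$ equals $(C)(0)$; at stationarity this coincides with $(C)_*$, but the proposition is stated with $(C)(0)$ to emphasise the role of the initial condition.

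The arithmetic bookkeeping is the part most prone to error, so I would organise it as a single linear relation: let $p_{CG}(t)$ and $p_{C\bar G}(t)$ denote the two frequencies above, and write $\tfrac{\dd}{\dd t}(p_{CG}+p_{C\bar G})$ using the two relevant columns of the transpose of $Q$. The cancellation that turns the raw generator entries into the clean coefficient $-4$ and the source term $(C)(0)$ is where I expect the main obstacle to lie: one must correctly account for the fact that a substitution $C\to\{A,T,G\}$ at a non-CpG site happens at total rate $3$ (not $4$), and that the entry $-4$ in the $C\bar G$ row already encodes substitution of the $C$ plus substitution of the $\bar G$ neighbour, only one of which destroys the $C$-occupancy. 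Disentangling which transitions genuinely change the occupancy of site $i$ by $C$, as opposed to merely changing the neighbour, is the delicate accounting that produces the stated equation.

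Finally, I would justify differentiating under the limit defining the frequency: the almost sure convergence of empirical frequencies to their $\pi$-expectations, together with the smoothness of $t\mapsto \exp(tQ)$, lets me replace the spatial average by the expectation under $\pi$ and differentiate the finite-dimensional semigroup $\exp(tQ)$ entrywise. Since $Q$ is a fixed $4\times 4$ matrix, $t\mapsto \exp(tQ)$ is real-analytic, so the differentiation is legitimate and the resulting identity holds for every $t\ge 0$, giving the claimed linear differential equation.
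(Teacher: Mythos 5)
Your proposal is correct and follows essentially the same route as the paper, which obtains the equation from the autonomous evolution of the dinucleotides coded over $\{C,\bar C\}\times\{G,\bar G\}$ with the rate matrix $Q$. The bookkeeping you flag does close: writing $(C,C)(t)=(C*,CG)(t)+(C*,C\bar G)(t)$ and summing the corresponding two components of $p'(t)={}^tQ\,p(t)$ gives $-(3+r)(C*,CG)(t)-3(C*,C\bar G)(t)+(C*,\bar CG)(t)+(C*,\bar C\bar G)(t)$, and substituting $(C*,\bar CG)(t)+(C*,\bar C\bar G)(t)=(C)(0)-(C*,CG)(t)-(C*,C\bar G)(t)$ yields exactly $-4(C,C)(t)-r(C*,CG)(t)+(C)(0)$.
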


Proposition~\ref{theo:evol1} is valid out of equilibrium. We use it at stationarity hence, in particular, for the initial values
\begin{align*}
(C)(0)=(C)_*=&\frac{4+r}{16+5r},\qquad
(CG)(0)=(CG)_*=\frac1{16+5r}.
\end{align*}

The equation in proposition~\ref{theo:evol1} yields expressions of $( C,C ) (t)$. Consider the positive real numbers $u$, $u_+$ and $u_-$ defined as
\[
u=\sqrt{4+2r+r^2},\quad 
u_+=6+r+u,\qquad
u_-=6+r-u.
\]

\begin{coro} \label{coro:evoC}
In the stationary regime,
\[
  ( C,C ) (t) =  c_0 \ee^{-4t} + c_+ \ee^{-u_+ t} + c_- \ee^{-u_-t} + (C)_*^2,
\]
with
\[
  c_0  = 
\frac{3 + r}{2(16+5 r)}
  \quad \mbox{and} \quad c_\pm = 
\frac{3 + r}{4u (16+5r)^2} \left( u(16+3r)\mp (32+14r+3r^2) \right).
\]
\end{coro}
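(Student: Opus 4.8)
The plan is to exploit the autonomy of the dinucleotide chain coded on $\{C,\bar{C}\}\times\{G,\bar{G}\}$ together with Proposition~\ref{theo:evol1}, and to reduce everything to the spectral analysis of the $4\times4$ generator $Q$. At stationarity the coded dinucleotide $(X_0(s),X_1(s))$ is a stationary Markov chain with generator $Q$ and invariant law $\pi_Q=\big((CG)_*,(\bar{C}G)_*,(\bar{C}\bar{G})_*,(C\bar{G})_*\big)$, and $X_0(s)=C$ exactly when this dinucleotide lies in $S:=\{CG,C\bar{G}\}$. Hence, writing $p$ for the row vector supported on $S$ with entries $\pi_Q$ and $\1_S$ for the column indicator of $S$,
\[
(C,C)(t)=\Prob_\pi(X_0(0)=C,\,X_0(t)=C)=\sum_{a\in S}\sum_{b\in S}\pi_Q(a)\,[\ee^{tQ}]_{ab}=p\,\ee^{tQ}\,\1_S,
\]
so $(C,C)(t)$ is a linear combination of the functions $\ee^{\lambda t}$ over the eigenvalues $\lambda$ of $Q$.

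Second, I would determine the spectrum of $Q$. A direct computation of the characteristic polynomial gives the factorisation $\lambda(\lambda+4)\big(\lambda^2+(12+2r)\lambda+(32+10r)\big)$; since $u_++u_-=12+2r$ and $u_+u_-=(6+r)^2-u^2=32+10r$, the eigenvalues are $0$, $-4$, $-u_-$ and $-u_+$ (for $r>0$ they are distinct, and the case $r=0$ follows by continuity in $r$). This forces $(C,C)(t)=c_0\,\ee^{-4t}+c_+\,\ee^{-u_+t}+c_-\,\ee^{-u_-t}+\text{const}$. The constant is the contribution of the eigenvalue $0$, whose left and right eigenvectors are $\pi_Q$ and $\1$; it equals $\big((CG)_*+(C\bar{G})_*\big)^2=(C)_*^2$, consistent with $\lim_{t\to\infty}(C,C)(t)=(C)_*^2$ by mixing.

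Third, I would pin down the three coefficients. For $c_0$ the eigenvalue $-4$ is especially convenient: one checks that $(0,1,0,-1)$ is simultaneously a right and a left eigenvector of $Q$ for $-4$, and the associated rank-one spectral projector yields $c_0=\dfrac{3+r}{2(16+5r)}$ with almost no algebra. For $c_+$ and $c_-$ I would avoid the $-u_\pm$ eigenvectors (which carry the irrational $u$) and instead impose two initial conditions: $(C,C)(0)=(C)_*$, and, evaluating Proposition~\ref{theo:evol1} at $t=0$ with $(C)(0)=(C)_*$ and $(C*,CG)(0)=(CG)_*$, the relation $(C,C)'(0)=-3(C)_*-r(CG)_*$. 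With $c_0$ known this gives the linear system
\[
c_++c_-=(C)_*-(C)_*^2-c_0,\qquad u_+c_++u_-c_-=3(C)_*+r(CG)_*-4c_0,
\]
whose determinant is $u_--u_+=-2u\neq0$; solving it and substituting $(C)_*=\frac{4+r}{16+5r}$ and $(CG)_*=\frac1{16+5r}$ produces the stated $c_\pm$.

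The bulk of the work, and the only real obstacle, is algebraic bookkeeping: factoring the $4\times4$ characteristic polynomial of $Q$, and then simplifying the Cramer solution of the $2\times2$ system into the closed form $\dfrac{3+r}{4u(16+5r)^2}\big(u(16+3r)\mp(32+14r+3r^2)\big)$. The $1/u$ prefactor is exactly the $1/(u_+-u_-)$ coming from the determinant, and the two polynomial blocks $u(16+3r)$ and $32+14r+3r^2$ merely separate the part of each coefficient that multiplies $u$ from its rational part.
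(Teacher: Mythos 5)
Your proposal is correct and follows essentially the same route as the paper: both rest on the autonomy of the dinucleotide chain coded on $\{C,\bar C\}\times\{G,\bar G\}$ and the spectral decomposition of the $4\times4$ generator $Q$ (whose nonzero eigenvalues are indeed $-4$, $-u_-$, $-u_+$), combined with the differential equation of Proposition~\ref{theo:evol1}. The only difference is organisational: where the paper simply computes $\ee^{tQ}$ in full, you identify the exponents from the spectrum and then pin down the coefficients via the rank-one projector onto the $-4$-eigenspace (using that $(0,1,0,-1)$ is both a left and a right eigenvector, which checks out and gives $c_0=\tfrac12 (C\bar G)_*$) and the initial conditions $(C,C)(0)=(C)_*$ and $(C,C)'(0)=-3(C)_*-r(CG)_*$ — a correct and somewhat more economical bookkeeping that I have verified reproduces the stated $c_\pm$.
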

As expected,
\[
c_++c_-+c_0=(C)_*-(C)_*^2.
\]
Furthermore, for every positive $ r $,
\[
4<u_-<5<2r+7<u_+<2r+8.
\]

Although the JC+CpG model is not reversible, the dynamics of dinucleotides encoded as $\{ C, \bar C \} \times \{ G, \bar G \} $ with respect to this model is reversible. This can easily be checked by looking at the cycles in figure~\ref{figu:grapC}.

Reversibility means that the dynamics will look the same whether time runs forward or backward. As a result, given two sequences at stationarity, the probability of data in a state is the same whether one sequence is ancestral to the other or both are descendants of an ancestral sequence at stationarity. Roughly speaking, for every $(X,Y)$ and $(Z,T)$ that belong to $\{ C, \bar C \} \times \{ G, \bar G \} $, going from a $XY$ at time $t$ to $0$ then back to a $ZT$ at time $t$ on another branch, is equivalent to going from a $XY$ to at time $0$ to a $ZT$ at time $2t$.

As a consequence, for every positive $t$, we have
\[
  [C,C](t) = (C,C)(2t).
\]

For every positive $r$, the parameters $c_\pm$ and $c_0$,
are positive. This proves the following proposition.
\begin{prop}\label{prop:mon1}
In the JC+CpG model, the functions $ t\mapsto ( C,C ) (t) $ and $ t\mapsto [ C,C ] (t) $ are
decreasing diffeomorphisms from $[0,+\infty)$ to $((C)_*^2,(C)_*]$.
\end{prop}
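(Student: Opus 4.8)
The plan is to read everything off the explicit representation of $(C,C)(t)$ furnished by corollary~\ref{coro:evoC}, combined with the positivity of the coefficients $c_0,c_+,c_-$ stated just before the proposition and the identity $[C,C](t)=(C,C)(2t)$ established above.

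First I would handle $t\mapsto(C,C)(t)$ directly. Since
\[
(C,C)(t)=c_0\,\ee^{-4t}+c_+\,\ee^{-u_+t}+c_-\,\ee^{-u_-t}+(C)_*^2
\]
with all three coefficients positive and all three rates $4,u_+,u_-$ positive (recall $4<u_-$ and $7<u_+$ from the inequalities following corollary~\ref{coro:evoC}), the function is $C^\infty$ and its derivative
\[
(C,C)'(t)=-4c_0\,\ee^{-4t}-u_+c_+\,\ee^{-u_+t}-u_-c_-\,\ee^{-u_-t}
\]
is a sum of strictly negative terms, hence $(C,C)'(t)<0$ for every $t\ge0$. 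Thus $t\mapsto(C,C)(t)$ is strictly decreasing with nowhere-vanishing derivative.

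Next I would pin down the image. At $t=0$, the identity $c_0+c_++c_-=(C)_*-(C)_*^2$ recorded after corollary~\ref{coro:evoC} gives $(C,C)(0)=(C)_*$; and as $t\to+\infty$ the three exponentials tend to $0$, so $(C,C)(t)\to(C)_*^2$, a value never attained. A continuous strictly decreasing function carries $[0,+\infty)$ bijectively onto the half-open interval $((C)_*^2,(C)_*]$, and since the function is smooth with derivative that never vanishes, the inverse function theorem makes the inverse smooth as well. Hence $t\mapsto(C,C)(t)$ is a decreasing diffeomorphism onto $((C)_*^2,(C)_*]$.

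Finally, for $[C,C]$ I would compose with the doubling map. Since $[C,C](t)=(C,C)(2t)$ and $t\mapsto2t$ is a diffeomorphism of $[0,+\infty)$ onto itself, the composition is again $C^\infty$, strictly decreasing because $[C,C]'(t)=2\,(C,C)'(2t)<0$, and maps $[0,+\infty)$ diffeomorphically onto the same interval $((C)_*^2,(C)_*]$. There is no serious obstacle in this argument: all the analytic work is already packaged in corollary~\ref{coro:evoC}. The one point that genuinely requires the explicit formula is the positivity of the coefficients; $c_0$ and $c_-$ are manifestly positive, whereas the positivity of $c_+$ amounts to the inequality $u(16+3r)>32+14r+3r^2$ for all $r>0$ (an equality at $r=0$, which is why the usual Jukes--Cantor model keeps a single exponential), and this sign verification, carried out by squaring, is the only computation worth spelling out in detail.
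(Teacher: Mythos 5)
Your proof is correct and takes essentially the same route as the paper, which simply observes that $c_0$, $c_+$ and $c_-$ are positive for $r>0$ and that $[C,C](t)=(C,C)(2t)$, leaving the remaining details (strict negativity of the derivative, the limiting values, the inverse function theorem) implicit. Your identification of the sign of $c_+$, i.e.\ the inequality $u(16+3r)>32+14r+3r^2$ for $r>0$ (checked by squaring, the difference of squares being $96r^2+30r^3$), as the only nontrivial computation is exactly right.
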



\section{Evolutions of $(A,A)(t)$ and $[A,A](t)$ in JC+CpG models} \label{sect:evac}

Like we did to study $(C,C)$, it is possible to encode dinucleotides such that under the JC+CpG model, $(A,A)$ is a linear combination of terms involved in an autonomous evolution. It suffices to encode the dinucleotides as $ \{ C, \bar C \} \times \{A,G,Y\}$, and the dynamics can be represented with the graph given in figure~\ref{figu:grapA}.

\begin{figure}[!ht]
\begin{center}
\includegraphics{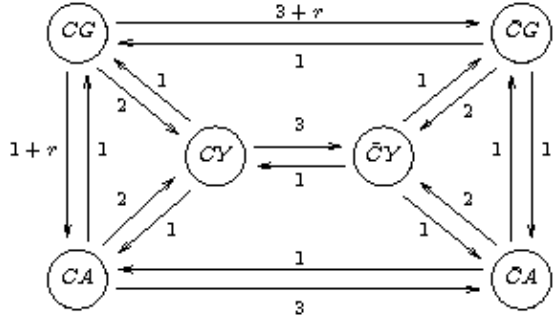}
\end{center}
\caption{Dynamics of dinucleotides encoded as $\{ C, \bar C \} \times \{ A, G, Y \} $}
\label{figu:grapA}
\end{figure}

However, we don't use this encoding to compute $(A,A)(t)$. Indeed, the evolution matrix associated to this encoding is a $6\times 6$ matrix whereas it is possible to deal with the $4\times4$ matrix $Q$, defined in section~\ref{sect:evCA}, to state the evolution of $(A,A)$ as explained below. 

We choose to present this encoding because it is a way to understand the difference between the role of $C$ and $A$ in the Jukes Cantor model with CpG effect. Indeed, the dynamics of dinucleotides encoded as  $ \{ C, \bar C \} \times \{A,G,Y\}$ is not reversible. This can be checked by looking at the cycle $CA \rightarrow CY \rightarrow CG \rightarrow CA $ in figure~\ref{figu:grapA}. As a consequence, even if the non-reversibility of the dynamics does not strictly prove that the identity $ [A,A](t) = (A,A)(2t) $ never holds when $r >0$, the non reversibility of the dynamics can explain why such an identity is unlikely to be true, and in fact, unlike $[C,C]$, as soon as $r > 0$ and $t>0$, 
\[
  [A,A](t) \ne (A,A)(2t).
\]
We strictly explain  this fact at the end of the current section. Now, we describe a way to state the expression of $(A,A)(t)$. Given that there are only three distinct set of two-letter configurations leading to different transition rates to $*A$, that is, $*A$, $CG$, and the complement of these two, the following result is easy to derive.
\begin{prop}
The evolution of $(A,A)(t)$ satisfies the linear differential equation
\[
  (A,A)'(t) = -4(A,A)(t) + r(*A,CG)(t) + (A)(0).
\]
\end{prop}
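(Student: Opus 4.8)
The plan is to derive the differential equation directly from the generator of the process, by differentiating the indicator-based definition of $(A,A)(t)$ and grouping the contributing transitions according to their effect on whether a site is occupied by $A$. First I would recall that $(A,A)(t)$ is the stationary frequency of sites that carry $A$ at both time $0$ and time $t$; concretely, fixing a site $i$, we have $(A,A)(t)=\Prob_\pi(X_i(0)=A,\,X_i(t)=A)$, which is well defined by translation invariance of $\pi$. Differentiating in $t$ means applying the generator to the event $\{X_i(t)=A\}$ conditioned on $\{X_i(0)=A\}$, so only substitutions \emph{at site $i$ itself} contribute to $(A,A)'(t)$; substitutions at neighbouring sites change the \emph{rates} but not instantaneously the state at $i$.

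The key step is to enumerate, for a site currently in state $A$ (the only states that can leave the event $\{X_i(t)=A\}$) and for a site not in state $A$ (the only states that can enter it), the rate at which the relevant substitution happens. Under JC+CpG, any substitution out of $A$ occurs at the Jukes--Cantor rate $1$ per target letter, for a total outflow rate of $3$ from an $A$ site, and the CpG mechanism never accelerates departures from $A$ (it only accelerates $C\to T$ inside CpG and $G\to A$ inside CpG). The inflow into $A$ comes from three non-$A$ letters each at rate $1$, \emph{plus} the extra rate $r$ contributed exactly when the site is a $G$ sitting in a CpG context, i.e.\ a $G$ whose left neighbour is $C$. Writing $H_i(t,A)=\1\{X_i(t)=A\}$ as in the notations, I would compute
\[
\frac{\dd}{\dd t}\,\E_\pi\big(H_i(0,A)\,H_i(t,A)\big)
= \E_\pi\big(H_i(0,A)\,\mathcal{L}H_i(\cdot,A)(t)\big),
\]
and identify $\mathcal{L}H_i(\cdot,A)$ as the net rate (inflow minus outflow) just described. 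The outflow term yields $-3(A,A)(t)$ from the three exit channels; the standard inflow from the three non-$A$ letters yields $+\big((A)(0)-(A,A)(t)\big)$ after writing the total probability of being non-$A$ as $(A)(0)-(A,A)(t)$ and using that each of the three non-$A$ states feeds $A$ at rate $1$; combining the $-3(A,A)(t)$ and the $-(A,A)(t)$ hidden in the inflow bookkeeping gives the coefficient $-4(A,A)(t)$ together with the constant $+(A)(0)$. The CpG acceleration contributes the extra flux into $A$ coming precisely from CpG sites, which is the term $+r\,(*A,CG)(t)$: the notation $*A=\AA\times\{A\}$ together with $CG$ correctly selects sites that are $A$ at time $0$ and, at time $t$, are the $G$ of a $CG$ pair poised to jump to $A$ at the boosted rate.

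The only delicate point is the sign and the exact word appearing in the CpG term, which is why this proposition is the mirror image of Proposition~\ref{theo:evol1} with $-r(C*,CG)$ replaced by $+r(*A,CG)$. For $(C,C)$ the CpG mechanism accelerates departures \emph{from} $C$ (the $C\to T$ enhancement acts on the left letter of a $CG$), so it appears as a negative flux attached to $C*,CG$; for $(A,A)$ the same mechanism accelerates arrivals \emph{into} $A$ (the $G\to A$ enhancement produces an $A$ from the right letter of a $CG$), so it appears as a positive flux attached to $*A,CG$. I would make this precise by carefully matching the orientation of the dinucleotide word to which letter of the $CG$ pair is being substituted, using the convention $*A=\AA\times\{A\}$ fixed in the notations; this bookkeeping, rather than any analytic difficulty, is the main obstacle, and once the rate of each channel is correctly attributed the differential equation follows immediately. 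As noted in the text, this derivation is valid out of equilibrium, which is why the constant term is written $(A)(0)$ rather than $(A)_*$.
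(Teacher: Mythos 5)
Your derivation is correct and is essentially the paper's own argument: the paper justifies the proposition in one line by noting that only three classes of two-letter configurations ($*A$, $CG$, and their complement) have distinct transition rates into $*A$, which is exactly the flux classification you spell out via the generator (outflow at rate $3$ from $A$, base inflow at rate $1$ from non-$A$ giving $(A)(0)-(A,A)(t)$, and the extra rate $r$ from the $G$ of a $CG$ giving $+r(*A,CG)(t)$). Your bookkeeping of the orientation of the dinucleotide word and of the sign difference with Proposition~\ref{theo:evol1} is also accurate.
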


Let $U(t)$ denote the time dependent vector defined as
\[
  \begin{pmatrix} (*A,CG)(t)\\ (*A , \bar{C}G)(t)\\ (*A , \bar{C}\bar{G})(t) \\ (*A, C\bar{G})(t) \end{pmatrix},
\]
then we have, as a straightforward consequence of the encoding $\{C, \bar C\} \times \{ G, \bar G \} $,
\[
  U'(t) = {}^tQ U(t).
\]
We can now compute $(*A,CG)(t)$, infer the value $(A)_*$ of $(A)(0)$ at stationarity and finally state the expression of $(A,A)(t)$.

\begin{coro} \label{coro:evoA}
In the stationary regime,
\[
  (A,A)(t) =  a_0 \ee^{-4t} + a_+ \ee^{-u_+ t} + a_- \ee^{-u_-t} +  (A)_*^2,
\]
with
\[
  a_0 = \frac{80+31 r}{32(16+5r)},
\]
and
\[ 
  a_\pm = 
\frac{512 + 384 r  + 106 r^2 + 13 r^3\mp u(256+18r+13r^2)}{64u(16+5r)^2}.
\]
\end{coro}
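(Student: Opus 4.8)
The plan is to integrate the linear differential equation for $(A,A)(t)$ given in the preceding proposition,
\[
  (A,A)'(t) = -4(A,A)(t) + r(*A,CG)(t) + (A)_*,
\]
where I have replaced $(A)(0)$ by $(A)_*$ since we work at stationarity. The only genuinely unknown term on the right-hand side is the forcing $(*A,CG)(t)$, which is precisely the first coordinate $U_1(t)$ of the vector $U(t)$. Since $U'(t)={}^tQ\,U(t)$, the first step is to solve this four-dimensional linear system, and the second step is to substitute the result into the scalar equation above and integrate.

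For the first step I would diagonalise $Q$. Its eigenvalues are $0,-4,-u_+,-u_-$: this is the spectrum already exhibited through corollary~\ref{coro:evoC}, and it is confirmed by the trace identity $-(6+2r)-4-2-4=-16-2r=-4-u_+-u_-$. The initial vector $U(0)$ is read off from the stationary dinucleotide frequencies, and crucially two of its coordinates vanish, namely $(*A,CG)(0)=(*A,\bar C G)(0)=0$, because the events $\{X_{i+1}(0)=A\}$ and $\{X_{i+1}(0)=G\}$ are incompatible, while the two surviving coordinates are the stationary frequencies $(\bar C A)_*$ and $(CA)_*$. Expanding $U(0)$ on the eigenbasis of ${}^tQ$ and applying $\ee^{t\,{}^tQ}$ then yields $(*A,CG)(t)$ as a combination of $1,\ee^{-u_+t},\ee^{-u_-t}$, the constant part being the limit $(A)_*(CG)_*$ as $t\to+\infty$.

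The decisive structural point, which I would isolate, is that no $\ee^{-4t}$ mode occurs in $(*A,CG)(t)$. Indeed, the coefficient of $\ee^{-4t}$ in the coordinate $U_1$ is proportional to the $CG$-component of the $-4$ eigenvector of ${}^tQ$, equivalently to the $CG$-component of the left $-4$ eigenvector of $Q$; a direct computation shows that this left eigenvector is $(0,1,0,-1)$ whenever $r>0$, so that component is zero. This is what prevents a resonance when the $(A,A)$ equation is integrated, and it explains the absence of any secular term $t\,\ee^{-4t}$ in the final formula.

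With $(*A,CG)(t)=(A)_*(CG)_*+b_+\ee^{-u_+t}+b_-\ee^{-u_-t}$ in hand, I would integrate the scalar equation using the integrating factor $\ee^{4t}$. The constant forcing $r(A)_*(CG)_*+(A)_*$ produces the particular constant $\tfrac14\big(r(A)_*(CG)_*+(A)_*\big)$, which equals $(A)_*^2$ once one substitutes $(A)_*=\tfrac{8+3r}{2(16+5r)}$ and $(CG)_*=\tfrac1{16+5r}$; this fixes the additive constant of the statement. Each forcing term $r b_\pm\ee^{-u_\pm t}$ integrates to a multiple of $\ee^{-u_\pm t}$ since $u_\pm\neq4$, giving $a_\pm$, while the homogeneous mode $a_0\ee^{-4t}$ is the one coefficient left free, pinned down by the initial condition $(A,A)(0)=(A)_*$. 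Collecting the three coefficients gives the stated $a_0$ and $a_\pm$. The main obstacle is purely computational: carrying the eigenvectors of $Q$ and the projection of $U(0)$ through the algebra to obtain $b_\pm$ in closed form, and then simplifying the resulting coefficients, rational in $r$, to the compact expressions in the corollary.
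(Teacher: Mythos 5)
Your proposal follows essentially the same route as the paper: it solves the autonomous system $U'(t)={}^tQ\,U(t)$ to obtain the forcing term $(*A,CG)(t)$, substitutes it into the scalar differential equation for $(A,A)(t)$, and integrates, exactly as outlined in section~\ref{sect:evac}. The additional structural observations you make --- the vanishing of the $CG$-component of the left $(-4)$-eigenvector of $Q$, which rules out a secular $t\,\ee^{-4t}$ term, and the verification that the constant forcing produces exactly $(A)_*^2$ --- are correct and usefully make explicit what the paper leaves implicit.
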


For every positive $r$, the parameters $a_\pm$ and $a_0$,
are positive. This proves the following proposition.
\begin{prop}\label{prop:mon2}
In the stationary JC+CpG model, the function $ t\mapsto ( A,A ) (t) $ is a 
decreasing diffeomorphism from $[0,+\infty)$ to $((A)_*^2,(A)_*]$.
\end{prop}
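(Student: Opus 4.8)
The plan is to read the conclusion off the closed form for $(A,A)(t)$ furnished by Corollary~\ref{coro:evoA}, once the positivity of the three coefficients $a_0$, $a_+$ and $a_-$ is in hand. Starting from
\[
  (A,A)(t) = a_0 \ee^{-4t} + a_+ \ee^{-u_+ t} + a_- \ee^{-u_- t} + (A)_*^2,
\]
and recalling that $u_\pm = 6 + r \pm u > 0$ with $u = \sqrt{4+2r+r^2}$, term-by-term differentiation gives
\[
  (A,A)'(t) = -4 a_0 \ee^{-4t} - u_+ a_+ \ee^{-u_+ t} - u_- a_- \ee^{-u_- t}.
\]
If $a_0$, $a_+$ and $a_-$ are all positive, then each summand is strictly negative for every $t \ge 0$, so $(A,A)'(t) < 0$ throughout and, in particular, the derivative never vanishes. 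Since a finite sum of exponentials is $C^\infty$ (indeed real-analytic), a strictly monotone map with nowhere-vanishing derivative is a diffeomorphism onto its image by the inverse function theorem; so it only remains to identify that image and to prove positivity of the coefficients.

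For the image, note that at stationarity $(A,A)(0)$ is simply the frequency of sites carrying an $A$, namely $(A)_*$; equivalently $a_0 + a_+ + a_- = (A)_* - (A)_*^2$, which one verifies from the explicit formulas exactly as in the relation $c_0 + c_+ + c_- = (C)_* - (C)_*^2$ recorded after Corollary~\ref{coro:evoC}. As $t \to +\infty$ all three exponentials tend to $0$, whence $(A,A)(t) \to (A)_*^2$. By continuity and strict monotonicity, the image of $[0,+\infty)$ is exactly the half-open interval $((A)_*^2, (A)_*]$, as asserted.

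The crux is therefore the positivity of $a_0$, $a_+$ and $a_-$ for every $r > 0$. Positivity of $a_0 = (80+31r)/(32(16+5r))$ is immediate, and positivity of $a_-$ is clear as well, since its numerator $512 + 384r + 106r^2 + 13r^3 + u(256 + 18r + 13r^2)$ is a sum of nonnegative terms with $u > 0$. The only delicate case is $a_+$, where I must show
\[
  512 + 384r + 106r^2 + 13r^3 > u(256 + 18r + 13r^2).
\]
Both sides are positive for $r > 0$, so I would square and substitute $u^2 = 4 + 2r + r^2$, reducing the claim to the polynomial inequality
\[
  (512 + 384r + 106r^2 + 13r^3)^2 > (4 + 2r + r^2)(256 + 18r + 13r^2)^2
\]
on $(0,+\infty)$. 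At $r=0$ the two sides coincide — consistent with $a_+$ vanishing when the model degenerates to plain Jukes-Cantor — so the difference of the two polynomials has a root at $r=0$; after factoring out this $r$, I expect the remaining degree-four factor to have nonnegative coefficients (its leading coefficient is positive), which would settle the inequality for all $r > 0$. This polynomial bookkeeping is the one genuinely computational step and the main obstacle, though it involves no new idea.
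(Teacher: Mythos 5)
Your proposal is correct and follows the paper's own route: the paper likewise reads the result off Corollary~\ref{coro:evoA}, asserting that $a_0$, $a_+$ and $a_-$ are positive for every $r>0$, the rest being the standard argument for a positive combination of decaying exponentials plus the constant $(A)_*^2$. The one computation you defer does go through: the difference of the two squared polynomials equals $r\,(225280 + 144112\,r + 69672\,r^2 + 12628\,r^3 + 1950\,r^4)$ (the degree-six terms cancel), and the quartic factor has all coefficients positive, so $a_+>0$ for every $r>0$ as required.
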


We deal now with the evolution of $[A,A](t)$. Extending the strategy used to prove proposition~\ref{prop:mon2}, one can also derive an explicit expression (not stated) for $[A,A](t)$, which turns out to be different from $(A,A)(2t)$. Indeed, the computation under \texttt{Maple} shows that the coefficients of $\ee^{-v_+ t}$ and $\ee^{-v_- t}$, where $v_\pm = 10 + r \pm u$, in the expression of $[A,A](t)$ are nonzero. This fact alone proves that $[A,A](t)$ can't be equal to $(A,A)(2t)$. However, we observe on an exemple that the two quantities are very close as one can see on figure~\ref{figu:AA(t)}.

\begin{figure}[!ht] 
\begin{center}
\includegraphics[height=4cm, width=8cm]{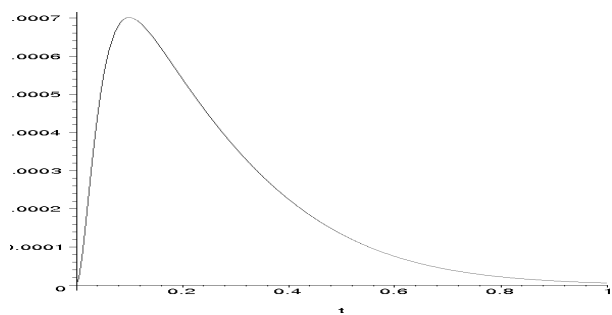}
\end{center}
\caption{Representation of  $t \mapsto [A,A](t) - (A,A)(2t)$, when $r=10$}
\label{figu:AA(t)}
\end{figure}

 We do not provide the expression of $[A,A](t)$, however it seems that the following conjecture holds.
\begin{conj} \label{prop:mon3}
In the JC+CpG model, the function $ t\mapsto [ A,A ] (t) $ is a 
decreasing diffeomorphism from $[0,+\infty)$ to $((A)_*^2,(A)_*]$.
\end{conj}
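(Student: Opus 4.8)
The plan is to follow the route that worked for $(A,A)(t)$ in Corollary~\ref{coro:evoA} and Proposition~\ref{prop:mon2}: first produce an explicit closed form for $[A,A](t)$ as a finite sum of decaying exponentials above the constant $(A)_*^2$, and then try to read off monotonicity from the signs of the coefficients. Concretely, I would start from the evolution equation $[A,A]'(t) = -8\,[A,A](t) + 2r\,[*A,CG](t) + 2(A)_*$ recalled in Lemma~\ref{lemm:lgnCA} and close the system on the joint two-branch dinucleotide frequencies. Since both present sequences descend from a common ancestor at stationarity and then evolve independently on the two branches, the two-branch analogue of the relation ${U'(t) = {}^tQ\,U(t)}$ is governed by a generator built from two commuting copies of $Q$. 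Consequently the exponents occurring in $[A,A](t)$ are among the pairwise sums $\lambda_i+\lambda_j$ of the eigenvalues of $Q$, whose spectrum is $\{0,-4,-u_+,-u_-\}$. The exponents $8$, $2u_+$ and $2u_-$ are exactly those carried by $(A,A)(2t)$, whereas the text has already isolated the genuinely new cross-exponents $v_\pm = 4 + u_\pm$, which are precisely what makes $[A,A](t) \neq (A,A)(2t)$.

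Once the exponents are identified, I would compute the coefficients explicitly, as the text reports was done under \texttt{Maple}. The boundary behaviour then comes for free from the general properties recalled in Section~\ref{sect:mode}: at $t=0$ the two sequences coincide with the ancestor, so $[A,A](0)=(A)_*$, and as $t\to+\infty$ the two sites decorrelate and reach stationarity, so $[A,A](t)\to(A)_*^2$. Hence the map already has the announced endpoints, and only strict monotonicity on $(0,+\infty)$ remains to be shown. In the favourable scenario, all coefficients of the decaying exponentials are positive; then $[A,A]'(t)$ is, term by term, a strictly negative sum of exponentials for every $t>0$, and $[A,A]$ is a decreasing diffeomorphism by the same one-line argument that closes Proposition~\ref{prop:mon2}.

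The main obstacle is that this favourable sign pattern may fail: there is no a priori reason for the coefficients of $\ee^{-v_+ t}$ and $\ee^{-v_- t}$ — the very terms absent from $(A,A)(2t)$ — to be positive for every $r>0$, and this is exactly why the statement is left as a conjecture rather than a proposition. If some coefficient is negative, monotonicity no longer follows term by term and one must instead prove $[A,A]'(t)<0$ directly. Here I would factor out the slowest-decaying exponential and reduce the claim to a single inequality in the variable $\ee^{-ut}$, grouping the symmetric pairs $\{v_+,v_-\}$ and $\{u_+,u_-\}$ into $\cosh(ut)$ and $\sinh(ut)$ contributions that become polynomial in $r$ after multiplication by $\ee^{(6+r)t}$. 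The difficulty is that this residual inequality must be controlled simultaneously in both $t\ge0$ and $r>0$, a two-parameter sign problem that does not reduce to finitely many cases. A more structural alternative would be a monotone-coupling argument showing that the event $\{X^1_i(t)=X^2_i(t)=A\}$ has decreasing probability as the branches lengthen; but establishing the requisite stochastic domination for dynamics that, unlike the $C$ case, is genuinely non-reversible on the relevant encoding is itself delicate, so I expect the algebraic reduction above to be the more tractable, if laborious, path.
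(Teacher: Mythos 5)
There is no proof to compare against here: the statement you are addressing is deliberately left as a \emph{conjecture} in the paper. The text stops after observing (via a \texttt{Maple} computation) that the coefficients of $\ee^{-v_+t}$ and $\ee^{-v_-t}$, $v_\pm=10+r\pm u$, are nonzero --- which establishes $[A,A](t)\neq(A,A)(2t)$ --- and explicitly declines to give the expression of $[A,A](t)$ or to prove monotonicity. Your setup is consistent with what the paper does elsewhere: the evolution equation for $[A,A]$ from Lemma~\ref{lemm:lgnCA}, the two-branch generator as a Kronecker sum of commuting copies of the one-branch generator (so that the exponents are pairwise sums of eigenvalues, which is exactly how $v_\pm=4+u_\pm$ arises), and the endpoint values $[A,A](0)=(A)_*$ and $[A,A](t)\to(A)_*^2$. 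One small caution: for $[A,A]$ the relevant one-branch encoding is the $6\times6$ system on $\{C,\bar C\}\times\{A,G,Y\}$ rather than $Q$ itself, so you should not assert without checking that all exponents lie among pairwise sums of the spectrum of the $4\times4$ matrix $Q$; the paper only exhibits $v_\pm$ among them.

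The genuine gap is the one you yourself flag: nothing in your proposal actually establishes $[A,A]'(t)<0$ for all $t>0$ and all $r>0$. The ``favourable scenario'' of all-positive coefficients is precisely what is not known --- the new cross-terms $\ee^{-v_\pm t}$ have no a priori sign --- and your fallback (reducing to a two-parameter polynomial sign problem in $t$ and $r$, or a monotone coupling for a non-reversible dynamics) is a research programme, not an argument. So the proposal does not prove the statement; it reproduces the state of knowledge in the paper, where the claim is supported only by explicit computation of the analogous one-branch coefficients $a_0,a_\pm$ (Proposition~\ref{prop:mon2}), by the numerical closeness of $[A,A](t)$ to $(A,A)(2t)$ illustrated in Figure~\ref{figu:AA(t)}, and by simulations. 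If you want to turn the conjecture into a proposition, the decisive step is to compute \emph{all} coefficients of $[A,A](t)$ in closed form and verify their positivity for every $r>0$; absent that, the statement must remain conjectural.
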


\section*{Acknowledgments}

I would like to thank an anonymous referee for his deep and thorough reviews, and his helpful comments.

 
\appendix

\section{Short description of the RN model with YpR influence and notations} \label{appe:deRN}

Firstly, RN stands for Rzhetsky-Nei and means that the $ 4\times4 $ matrix of
substitution rates which characterize the independent evolution of the sites must satisfy $4$ equalities, summarized as follows: for every pair of nucleotides $ x $ and $ y \neq x $, the substitution rate from $x$ to  $y$ may depend on $x$ but only through the fact that $x$ is a purine ($A$ or $G$, symbol $R$) or a pyrimidine ($C$ or $T$, symbol $Y$). For instance, the substitution rates from $C$ to $A$ and from $T$ to $A$ must coincide, likewise for the substitution rates from $A$ to $C$ and from $G$ to $C$, from $C$ to $G$ and from $T$ to $G$, and finally from $A$ to $T$ and from $G$ to $T$. The $4$ remaining rates, corresponding to purine-purine  substitutions and to pyrimidine-pyrimidine substitutions, are free.

Secondly, the influence mechanism is called YpR, which stands for the fact that one allows any specific substitution rates between any two YpR dinucleotides ($CG$, $CA$, $TG$ and $TA$) which differ by one position only, for a total of $8$ independent parameters.
The Jukes-Cantor model with CpG effect is the simplest non trivial one: the only YpR substitutions with positive rate are $CG\to CA$ and $CG\to TG$, and both happen at the same rate.

Recall that $Y$ denote the set of pyrimidines defined as $ Y=\{T,C\} $, and $R$ the set of purines defined as $ = \{A,G\}$.

The $ 4\times4 $ matrix of
substitution rates which characterize the independent evolution of the sites in RN model is given by
\[
  \bordermatrix{
      & A     &T      & C    &G      \cr
    A & \cdot & v_T   & v_C   & w_G   \cr
    T & v_A   & \cdot & w_C   & v_G   \cr
    C & v_A   & w_T   & \cdot & v_G   \cr
    G & w_A   & v_T   & v_C   & \cdot \cr}.
\]

The influence mechanism called YpR adds specific rates of substitutions from each YpR dinucleotide as follows.
\begin{itemize}
\item Every dinucleotide $CG$ moves to $CA$ at rate $r_A^C$ and to $TG$ at rate $r_T^G$.
\item Every dinucleotide $TA$ moves to $CA$ at rate $r_C^A$ and to $TG$ at rate $r_G^T$.
\item Every dinucleotide $CA$ moves to $CG$ at rate $r_G^C$ and to $TA$ at rate $r_T^A$.
\item Every dinucleotide $TG$ moves to $CG$ at rate $r_C^G$ and to $TA$ at rate $r_A^T$.
\end{itemize}

\section{Extension of theorem~\ref{theo:finalC} to the RN model with YpR influence} \label{appe:resu}

Under conjecture~\ref{conj:diff}, it is possible to generalize theorem~\ref{theo:finalC} by suitably generalizing the definitions of $\kappa$ and $\nu$ given in section~\ref{sect:main}.
%
Introduce the parameters
\begin{align*}
  \kappa^{RN}_\mathrm{obs} =&\ -v_C (C,A)_\mathrm{obs} - w_C (C,T)_\mathrm{obs} + (v_A+w_T+v_G) (C,C)_\mathrm{obs} - v_C (C,G)_\mathrm{obs}\\
& - r^A_C (C*,TA)_\mathrm{obs} - r^G_C (C*,TG)_\mathrm{obs} + r^A_T (C*,CA)_\mathrm{obs} + r^G_T (C*,CG)_\mathrm{obs}.\\
  \nu^{RN}_{\mathrm{obs}} =&\ \nu^C_{\mathrm{obs}}.
\end{align*}

When $v_C=w_C=v_A=w_T=v_G=1$, $r^A_C=r^G_C=r^A_T=0$ and $r^G_T=r$, which is the case in the JC+CpG model, $\kappa^{RN}_\mathrm{obs}=\kappa^{C}_\mathrm{obs}$.

On the other hand, the observed quantity $\nu^C_\mathrm{obs}$ is unchanged between JC+CpG models and RN+YpR models because lemma~\ref{lemm:moyCA} holds in the general case.

Once again, Slutsky's lemma, through the observed quantities $\kappa^{RN}_\mathrm{obs}$ and $\nu^{RN}_\mathrm{obs}$ is the key to state theorem~\ref{theo:finalRN} below, which is a consequence of proposition~\ref{prop:TCA}.

\begin{theo} \label{theo:finalRN}
Assume that the ancestral sequence is at stationarity and that conjecture~\ref{conj:diff} holds. Then,   
when $ N\to+\infty $, $\kappa^{RN}_\mathrm{obs}\sqrt{N/\nu^{RN}_\mathrm{obs}}(T_C-t)$ converges in distribution to
  the standard normal law. 
An asymptotic confidence interval at level $ \varepsilon $ for $ t $ is
\[
  \left[ 
  T_C -
\frac{z(\varepsilon)}{\kappa^{RN}_\mathrm{obs}}\sqrt{\frac{\nu^{RN}_\mathrm{obs}}{N}}, 
T_C +
\frac{z(\varepsilon)}{\kappa^{RN}_\mathrm{obs}}\sqrt{\frac{\nu^{RN}_\mathrm{obs}}{N}}
\right],
\]
where $z(\varepsilon) $ denotes the unique real number such that $\Prob( | Z | \ge z(\varepsilon) ) = \varepsilon$ with $ Z $ a variable with standard normal law.
\end{theo}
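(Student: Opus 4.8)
The plan is to reproduce, almost verbatim, the Slutsky argument that proved theorem~\ref{theo:finalC} in section~\ref{sect:thfi}, since proposition~\ref{prop:TCA} already delivers, under conjecture~\ref{conj:diff}, the convergence in distribution of $\sqrt{N}(T_C-t)$ to a centered normal law with variance $\sigma_C^2(t)/(C,C)'(t)^2$. The only genuinely new ingredient is the analogue of lemma~\ref{lemm:lgnCA} in the RN+YpR setting, namely the almost sure convergences $\kappa^{RN}_\mathrm{obs}\to-(C,C)'(t)$ and $\nu^{RN}_\mathrm{obs}\to\sigma_C^2(t)$ as $N\to+\infty$.

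First I would establish the differential equation satisfied by $(C,C)(t)$ in the general model, generalizing proposition~\ref{theo:evol1}. Writing $(C,C)(t)=\Prob_\pi(X_i(0)=C,\,X_i(t)=C)$ and differentiating through the Markov generator, one balances the instantaneous out-flow and in-flow of the state $C$ at site $i$. The independent RN mechanism contributes an out-rate $v_A+w_T+v_G$ from $C$ and in-rates $v_C$, $w_C$, $v_C$ from $A$, $T$, $G$. Since $C$ occurs only as the left (pyrimidine) letter of a YpR dinucleotide, the influence mechanism adds an out-rate $r^G_T$ (resp.\ $r^A_T$) when the right neighbour is $G$ (resp.\ $A$), and an in-rate $r^G_C$ (resp.\ $r^A_C$) converting the dinucleotide $TG$ (resp.\ $TA$) into $CG$ (resp.\ $CA$). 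Collecting these and rewriting each joint probability through the frequency notation — for instance $\Prob_\pi(X_i(0)=C,X_i(t)=C,X_{i+1}(t)=G)=(C*,CG)(t)$ — yields
\[
(C,C)'(t)= v_C(C,A)(t)+w_C(C,T)(t)+v_C(C,G)(t)-(v_A+w_T+v_G)(C,C)(t)+r^A_C(C*,TA)(t)+r^G_C(C*,TG)(t)-r^A_T(C*,CA)(t)-r^G_T(C*,CG)(t).
\]
Comparison with the definition of $\kappa^{RN}_\mathrm{obs}$ shows that $\kappa^{RN}_\mathrm{obs}$ is exactly $-(C,C)'(t)$ with every limiting frequency replaced by its empirical counterpart.

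From here the argument is immediate. By ergodicity of $\pi$ with respect to translations, each observed frequency entering $\kappa^{RN}_\mathrm{obs}$ converges almost surely to its stationary value, whence $\kappa^{RN}_\mathrm{obs}\to-(C,C)'(t)$ almost surely. Because $\nu^{RN}_\mathrm{obs}=\nu^C_\mathrm{obs}$ and lemma~\ref{lemm:moyCA} holds in the general RN+YpR model, the same ergodic argument gives $\nu^{RN}_\mathrm{obs}\to\sigma_C^2(t)$ almost surely. Applying Slutsky's lemma to $X_N=\sqrt{N}(T_C-t)$, which converges in distribution to the centered normal distribution with variance $\sigma_C^2(t)/(C,C)'(t)^2$ by proposition~\ref{prop:TCA}, and to $Y_N=\kappa^{RN}_\mathrm{obs}/\sqrt{\nu^{RN}_\mathrm{obs}}$, which converges in probability to $-(C,C)'(t)/\sigma_C(t)$, one finds that $\kappa^{RN}_\mathrm{obs}\sqrt{N/\nu^{RN}_\mathrm{obs}}(T_C-t)=X_NY_N$ converges in distribution to the standard normal law, the scaling constant being chosen precisely so that the limiting variance equals $1$. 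The confidence interval then follows by noting that $t$ lies in the stated interval if and only if $|\kappa^{RN}_\mathrm{obs}\sqrt{N/\nu^{RN}_\mathrm{obs}}(T_C-t)|\le z(\varepsilon)$, an event of asymptotic probability $1-\varepsilon$ by the definition of $z(\varepsilon)$.

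The main obstacle is the first step: correctly accounting for all RN and YpR transition rates into and out of a $C$ site, keeping track of which neighbour is affected, and confirming that the resulting coefficients and signs match the ad hoc definition of $\kappa^{RN}_\mathrm{obs}$ term by term. Everything downstream — the ergodic convergences, the Slutsky manipulation, and the inversion to a confidence interval — is then a mechanical repetition of the JC+CpG case, so no additional difficulty is expected there.
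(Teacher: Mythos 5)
Your proposal is correct and follows essentially the same route as the paper: proposition~\ref{prop:TCA} under conjecture~\ref{conj:diff} gives the central limit theorem for $\sqrt{N}(T_C-t)$, and Slutsky's lemma combined with the almost sure convergences $\kappa^{RN}_\mathrm{obs}\to-(C,C)'(t)$ and $\nu^{RN}_\mathrm{obs}\to\sigma_C^2(t)$ (the RN+YpR analogue of lemma~\ref{lemm:lgnCA}) yields the pivotal quantity and the confidence interval. The only cosmetic difference is that you derive the expression for $(C,C)'(t)$ by a direct rate-balance at a $C$ site, whereas the paper obtains it from the autonomous evolution of dinucleotides encoded as $\{R,T,C\}\times\{Y,G,A\}$ in appendix~\ref{appe:just}; both give the same linear combination, and your signs and coefficients match the definition of $\kappa^{RN}_\mathrm{obs}$ term by term.
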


As in the JC+CpG model, the estimator $T_C$ is defined implicitly for RN+YpR models. We do not provide an explicit formula for $(C,C)(t)$ in the general model, but there are numerical methods to compute a closed form of the theoretical solution of the differential linear system, and consequently it is possible to solve equation $(C,C)(t) = (C,C)_{\mathrm{obs}}$ with numerical methods. 


\section{Evolution of $(C,C)(t) $ in RN+YpR models} \label{appe:just}

We base our description of the method in the general RN+YpR model on the encoding of dinucleotides as $\{R,T,C\} \times \{Y,G,A\}$, which has autonomous evolution.

The detailed description of the corresponding $9\times9$ matrix is given below as $m(uv,xy)$, where $uv$ and $xy$ are generic elements of the alphabet.

Let $v_R$ and $v_Y$ denote the quantities defined as
\[
  v_R = v_A + v_G, \quad v_Y = v_T + v_C.
\]
Then,
\begin{align*}
m(uv,xy) &= 0,   
\quad \mbox{if} \quad u \neq x \quad \mbox{and} \quad v \neq y;\\
m(Rx,ux) &= v_u,  
\quad \mbox{if} \quad x \in \{Y,G,A\} \quad \mbox{and} \quad u \in \{C,T\};\\
m(ux,Rx) &= v_R,  
\quad \mbox{if} \quad x \in \{Y,G,A\} \quad \mbox{and} \quad u \in \{C,T\};\\
m(Ru,Rv) &= w_v,
\quad \mbox{if} \quad \{u,v\}=\{A,G\};\\
m(xY,xu) &= v_u,  
\quad \mbox{if} \quad x \in \{R,C,T\} \quad \mbox{and} \quad u \in \{A,G\};\\
m(xu,xY) &= v_R,  
\quad \mbox{if} \quad x \in \{R,C,T\} \quad \mbox{and} \quad u \in \{A,G\};\\
m(uY,vY) &= w_v,
\quad \mbox{if} \quad \{u,v\}=\{T,C\};\\
m(xu,xv) &= w_v + r^x_v, 
\quad \mbox{if} \quad \{u,v\}=\{A,G\} \quad \mbox{and} \quad x \in\{T,C\};\\
m(ux,vx) &= w_v + r^x_v, 
\quad \mbox{if} \quad \{u,v\}=\{C,T\} \quad \mbox{and} \quad x \in\{A,G\}.\\
\end{align*}
It is then clear that quantities such as $(C,C)(t)$ can be computed provided one computes the exponential of the rate-matrix, and that quantities such as $(C,C)'(t)$ have computable explicit expressions in terms of frequencies expressed in the coded dinucleotide-alphabet $\{R,T,C\} \times \{Y,G,A\}$.

\section{Simulations} \label{appe:simu}

As a support to the conjecture that $ t \mapsto (C,C)(t)$ always defines a diffeomorphism in the general RN+YpR model, we performed some simulations. We give the range of parameter values that we explored and one example of figure obtained for one set of parameters, here a Kimura model with CpG influence. The code is available on the website 
\begin{center}
\texttt{http://www-fourier.ujf-grenoble.fr/\~{}mikael.f/en/recherches.htm}
\end{center}
\subsection{Range of parameter values explored}
\[
\begin{array}{||c||c|c|c|c|c|c|c||c||c|c|c|c|c|c|c||}
\hline
 v_A   & 1  & 1  & 1  & 1 & 1 & 1 & 1   & w_A  & 1 & 3  & 0.3  & 0.3 & 3 & 3 & 3\\
\hline
 v_T   & 1  & 1  & 1  & 1 & 1 & 2 & 0.3 & w_T  & 1 & 3  & 0.3  & 0.3 & 3 & 6 & 1\\
\hline
 v_C   & 1  & 1  & 1  & 1 & 1 & 1 & 2   & w_C  & 1 & 3  & 0.3  & 0.3 & 3 & 3 & 1\\
\hline
 v_G   & 1  & 1  & 1  & 1 & 1 & 2 & 10  & w_G  & 1 & 3  & 0.3  & 0.3 & 3 & 6 & 0.1 \\
\hline 
\hline
 r^C_A & 10 & 10 & 10 & 10 & 0.3 & 10 & 10  & r^G_T & 10 & 10 & 10 & 10 & 0.3 & 10 & 5   \\
\hline 
 r^A_C & 0  & 0  & 0  & 10 & 0.3 & 5  & 1   & r^T_G & 0  & 0  & 0  & 10 & 0.3 & 5  & 0.5 \\
\hline
 r^C_G & 0  & 0  & 0  & 10 & 0.3 & 3  & 20  & r^A_T & 0  & 0  & 0  & 10 & 0.3 & 3  & 3   \\
\hline 
 r^G_C & 0  & 0  &  0 & 10 & 0.3 & 1  & 0.3 & r^T_A & 0  & 0  & 0  & 10 & 0.3 & 1  & 0.1 \\
\hline
\end{array}
\]

\subsection{One example of figure performed on Maple}

Figure~\ref{figu:AAkimu} illustrates a simulation performed with the parameter values
\[
  v_A = v_T = v_C = v_G = 1, \quad w_A = w_T = w_C = w_G = 3,
\]
\[ 
  r^C_A = r^G_T = 10, \quad r^A_C = r^T_G = r^C_G = r^A_T = r^G_C = r^T_A = 0.
\]
This is a Kimura model with CpG influence. The function $ t \mapsto [A,A](t) $ is represented on the interval $[0,2]$.

\begin{figure}[!ht] 
\begin{center}
\includegraphics[height=6cm, width=8cm]{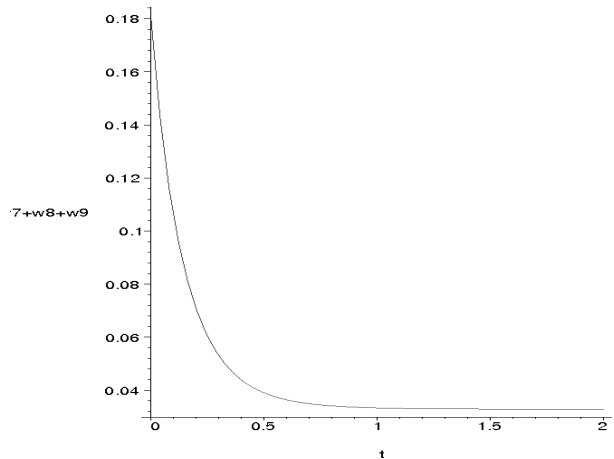}
\end{center}
\caption{One simulation of the function $t \mapsto [A,A](t)$ on the interval $[0,2]$}
\label{figu:AAkimu}
\end{figure}

\bibliographystyle{plain}
\bibliography{references09-05-11}
\end{document}